\documentclass[reqno,11pt]{amsart}

\usepackage[a4paper,left=25mm,right=25mm,top=30mm,bottom=30mm]{geometry}
\usepackage{amsmath}
\usepackage{amssymb}
\usepackage{amsthm}
\usepackage{amscd}
\usepackage{mathtools}
\usepackage{url}
\usepackage[ansinew]{inputenc}
\usepackage{color}
\usepackage[final]{graphicx}
\usepackage{esint} 
\usepackage{bbm}
\usepackage{subfigure}
\usepackage{tikz}
\usepackage{cite}
\usepackage{enumitem}
\usepackage[hidelinks]{hyperref}

\renewcommand{\epsilon}{\varepsilon}

\numberwithin{equation}{section}

\newtheoremstyle{thmlemcorr}{10pt}{10pt}{\itshape}{}{\bfseries}{.}{10pt}{{\thmname{#1}\thmnumber{ #2}\thmnote{ (#3)}}}
\newtheoremstyle{thmlemcorr*}{10pt}{10pt}{\itshape}{}{\bfseries}{.}\newline{{\thmname{#1}\thmnumber{ #2}\thmnote{ (#3)}}}
\newtheoremstyle{defi}{10pt}{10pt}{\itshape}{}{\bfseries}{.}{10pt}{{\thmname{#1}\thmnumber{ #2}\thmnote{ (#3)}}}
\newtheoremstyle{remexample}{10pt}{10pt}{}{}{\bfseries}{.}{10pt}{{\thmname{#1}\thmnumber{ #2}\thmnote{ (#3)}}}
\newtheoremstyle{ass}{10pt}{10pt}{}{}{\bfseries}{.}{10pt}{{\thmname{#1}\thmnumber{ A#2}\thmnote{ (#3)}}}

\theoremstyle{thmlemcorr}
\newtheorem{theorem}{Theorem}
\numberwithin{theorem}{section}
\newtheorem{lemma}[theorem]{Lemma}
\newtheorem{corollary}[theorem]{Corollary}
\newtheorem{proposition}[theorem]{Proposition}

\theoremstyle{thmlemcorr*}
\newtheorem{theorem*}{Theorem}
\newtheorem{lemma*}[theorem]{Lemma}
\newtheorem{corollary*}[theorem]{Corollary}
\newtheorem{proposition*}[theorem]{Proposition}
\newtheorem{problem*}[theorem]{Problem}
\newtheorem{conjecture*}[theorem]{Conjecture}

\theoremstyle{defi}

\theoremstyle{remexample}

\theoremstyle{ass}

\newcommand{\Bcal}{\mathcal{B}}

\newcommand{\Ecal}{\mathcal{E}}
\newcommand{\Fcal}{\mathcal{F}}

\newcommand{\Lcal}{\mathcal{L}}

\newcommand{\Qcal}{\mathcal{Q}}

\newcommand{\Ibb}{\mathbb{I}}

\DeclareMathOperator{\id}{id}

\DeclareMathOperator{\diverg}{div}

\DeclareMathOperator{\dist}{dist}

\DeclareMathOperator{\supp}{supp}

\newcommand{\absB}[1]{\Bigl|#1\Bigr|}

\newcommand{\N}{\mathbb{N}}
\newcommand{\R}{\mathbb{R}}

\newcommand{\ONE}{\mathbbm{1}}

\newcommand{\weakly}{\rightharpoonup}

\newcommand{\eps}{\epsilon}

    \newcommand{\vertiii}[1]{{\left\vert\kern-0.25ex\left\vert\kern-0.25ex\left\vert #1 
    \right\vert\kern-0.25ex\right\vert\kern-0.25ex\right\vert}}

\def\XXint#1#2#3{{\setbox0=\hbox{$#1{#2#3}{\int}$}
\vcenter{\hbox{$#2#3$}}\kern-.5\wd0}}

\DeclareMathOperator{\SO}{SO}

\DeclarePairedDelimiter\abs{\lvert}{\rvert}
\DeclarePairedDelimiter{\norm}{\lVert}{\rVert}

\renewcommand{\phi}{\varphi}

\newcommand{\dx}{\, dx}
\newcommand{\dy}{\, dy}

\newcommand{\dt}{\, dt}

\def\XXint#1#2#3{{\setbox0=\hbox{$#1{#2#3}{\int}$}
     \vcenter{\hbox{$#2#3$}}\kern-.5\wd0}}

\makeatletter
\g@addto@macro\bfseries{\boldmath}
\makeatother

\usepackage{esint}

\usepackage{appendix}

\def\XXint#1#2#3{{\setbox0=\hbox{$#1{#2#3}{\int}$}
		\vcenter{\hbox{$#2#3$}}\kern-.5\wd0}}

\definecolor{colora}{RGB}{21,128,66}
\definecolor{colorb}{RGB}{21,66,128}
\definecolor{colorc}{RGB}{128,21,66}

\usepackage{lmodern}
\usepackage{graphicx}
\usepackage{wrapfig}

\title[Nonlocal linearization]{Linearization in nonlocal elasticity via Gamma-convergence}

\author{Carolin Kreisbeck}
\address{Mathematisch-Geographische Fakult\"at, Katholische Universit\"at Eichst\"att-Ingolstadt, Osten\-stra{\ss}e 28, 85072 Eichst\"att, Germany}
\email{carolin.kreisbeck@ku.de}

\author{Carlos Mora-Corral}
\address{Departamento de Matem\'aticas, Universidad Aut\'onoma de Madrid, 28049 Madrid, Spain and Instituto de Ciencias Matem\'aticas, CSIC-UAM-UC3M-UCM, 28049 Madrid, Spain}
\email{carlos.mora@uam.es}

\author{Felix Seifert}
\address{Mathematisch-Geographische Fakult\"at, Katholische Universit\"at Eichst\"att-Ingolstadt, Osten\-stra{\ss}e 28, 85072 Eichst\"att, Germany}
\email{felix.seifert@ku.de}

\begin{document}
\maketitle

\thispagestyle{empty}
\begin{abstract}  
Linearization is a classical tool in continuum mechanics that connects nonlinear models of solids to their linear counterparts through the small-strain limit, as in the passage from finite to linear elasticity.  
Here, we study a nonlocal analogue of this limit process in the context of state-based peridynamics, starting from a class of models in nonlocal hyperelasticity subject to  nonlocal Dirichlet boundary conditions. Recent formal calculations in the literature indicate that the linearized versions of these models are closely related to the widely used Eringen models. 
 The main purpose of this note is to make this connection rigorous by establishing \(\Gamma\)-convergence of the associated energy functionals and a corresponding compactness result that together imply strong convergence of minimizers. 
 Our proof relies on a local-to-nonlocal transfer argument that reduces the problem to a well-known local linearization result. 

\vspace{8pt}

\noindent\textsc{MSC (2020): 49J45, 35R11, 74A70 (primary); 74B20,  74B05} 
\vspace{8pt}

\noindent\textsc{Keywords:} nonlocal variational problems, fractional and nonlocal gradients, linearization, $\Gamma$-convergence,  peridynamics, Eringen-type models 
\vspace{8pt}

\noindent\textsc{Date:} \today.
\end{abstract}

\section{Introduction} 
Linearization is an integral concept in continuum mechanics. 
While nonlinear models are needed to capture the response of a body under large deformations, their simpler linearized versions can be efficiently applied to describe the regime of small strains. In the mechanics literature, linearized models are commonly deduced by formal calculations based on Taylor expansion. The first mathematical derivation via $\Gamma$-convergence was achieved by Dal Maso, Negri \& Percivale~\cite{DNP02} in the context of elasticity. 
These results have since been extended in several directions, for instance, by weakening the coercivity conditions \cite{ADD12}, by allowing for energy densities with multiple wells \cite{ADLP18} and, more recently, with infinitely many wells \cite{AlLaPaWo26}, by removing restrictions on the traction forces \cite{MaorMora21}, or by accounting for surface energy terms \cite{KM25}. 
Similar results are also available for other material models,  e.g., in plasticity  \cite{MS13}, fracture mechanics \cite{Friedrich17, FSS26}, magnetoelasticity~\cite{AKM25}.  
While this shows that linearization is by now well understood for a wide range
of local models, much less is known in the nonlocal setting; see, e.g., \cite{MS24} for
a recent result in bond-based peridynamics.

In this note, we proceed in the
same spirit and study a variational model that falls within  the general framework of state-based
peridynamics \cite{S00, Sea07}. Our starting point is the theory of nonlocal
hyperelasticity, which was proposed in \cite{BCMC20Piola, BeCuMC22} to model
material behavior involving discontinuities and singularities, and has been
further developed in recent years, e.g.,~ \cite{CKS23, CKS25, BCM24, KS24, BMS25, EMS26}. 
These models are similar to those of classical hyperelasticity in that they
involve energy functionals of integral type whose densities depend on a quantity
measuring the changes in deformations $y$ of a given reference configuration
$\Omega \subset \R^n$.
 The key difference is that the classical gradient is replaced by a nonlocal
gradient $D_\rho$ associated with a suitable interaction kernel
$\rho\colon \R^n \to [0,\infty)$ that is singular at the origin. Here, we
additionally assume that $\rho$ has compact support in the ball $B_\delta$, which corresponds to a finite horizon $\delta$. 
For $\phi \in C_c^\infty(\R^n;\R^n)$, the nonlocal gradient is  then defined as 
\begin{align*}
    (D_\rho \phi)(x) = \int_{\R^n} \frac{\phi(x) - \phi(y)}{\abs{x-y}} \otimes \frac{x-y}{\abs{x-y}} \, \rho(x-y) \, dy \qquad\text{for $x\in \R^n$,}
\end{align*}
and this definition extends distributionally to weak nonlocal gradients of integrable functions; see Section~\ref{sec:nonlocalgradient}.  The admissible deformations $y$ of the model naturally lie in the nonlocal Sobolev
spaces $H^{\rho,p}(\R^n;\R^n)$ with $p \in [1,\infty]$, consisting of those $y \in L^p (\R^n; \R^n) $ with $D_{\rho} y \in (\R^n; \R^{n\times n})$. We impose boundary conditions by prescribing complementary values outside of $\Omega$, which leads
to the space $H^{\rho,p}_g(\Omega;\R^n)$ 
with $g \in H^{\rho,p}(\R^n;\R^n) \cap H^{\rho,\infty}(\R^n;\R^n)$. Note that elements of $H_{g}^{\rho,p}(\Omega;\R^n)$ can be viewed as functions on $\Omega_
\delta=\Omega+B_\delta$, since the nonlocal gradient outside $\Omega_\delta$ is fully determined by the complementary values $g$; by the same argument, the choice of $g$ outside of $\Omega_{2\delta}$ does not affect the modeling and analysis on the relevant domain. 
For the precise definitions and properties of these spaces, we refer to
Section~\ref{sec:preliminaries}.

With these notions at hand, the energies in our model of nonlocal
hyperelasticity, to be minimized among all deformations
$y \in H^{\rho,p}_g(\Omega;\R^n)$, are of the form
\begin{align}\label{Erho}
    \Ecal_\rho(y) = \int_{\Omega_\delta} W(x, D_\rho y)\,dx - \Lcal(y),
\end{align}
with $W\colon \Omega_\delta \times \R^{n\times n} \to \R$ the stored energy
density and $\Lcal\colon H^{\rho,p}(\R^n;\R^n) \to \R$ a bounded linear
functional. For example, one can take $\Lcal$ to be a standard external loading term of the
form $\Lcal(y) = \int_{\Omega_\delta} f \cdot y\,dx$ for $y \in H^{\rho,p}(\R^n;\R^n)$ with a given $f \in L^{\frac{p}{p-1}}(\R^n;\R^n)$.

The question of linearization for these nonlocal problems has already been addressed
in the literature in the case of homogeneous energies with identity boundary
conditions. More precisely, a formal linearization was carried out by Bellido, Cueto
and Mora-Corral~\cite{BCM23} for models involving finite-horizon fractional
gradients, and more recently by Bellido and Garc\'ia-S\'aez~\cite{BG26} for a
class of general nonlocal gradients.
These works show in fact that the formally derived linearized energies can be interpreted as
Eringen-type models with singular kernels, which form part of Eringen's theory of nonlocal elasticity ~\cite{E02} that is widely used in continuum mechanics
and engineering. Existence and uniqueness of solutions for such Eringen models
were studied by Evgrafov and Bellido~\cite{EB18}.

Our goal here is to provide a rigorous justification of the formal arguments
in~\cite{BCM23, BG26} by proving a corresponding nonlocal linearization result via
$\Gamma$-convergence. The latter is the natural framework for the asymptotic
analysis of variational problems; see, e.g.,~\cite{D93, BraidesGAMMA} for an
introduction and background.

Before we state our main result, some preparations are needed. Let us first
specify the assumptions on the energy functionals of the form~\eqref{Erho}. 
Following the setting of ~\cite{ADD12},
we work with a suitable single-well energy density. More precisely, we suppose that $W\colon \Omega_{\delta} \times \R^{n \times n} \to [0, \infty]$ is measurable with respect to the product of the Lebesgue $\sigma$-algebra on $\Omega_\delta$ and the Borel $\sigma$-algebra on $\R^{n\times n}$, and that, for some $p \in (1, 2]$ and for a.e.\ $x \in \Omega$, it satisfies the following conditions:

\begin{enumerate}[label=(W\arabic*)]
\item \label{W1} $W(x,F)=W(x,RF)$ for every $F \in \R^{n \times n}$ and every $R \in \SO(n)$;\\[-0.2cm]
\item $W(x,F)=0$ if $F \in \SO(n)$;\\[-0.3cm]
\item \label{W3}  $W(x,\cdot)$ is of class $C^2$ in a fixed neighborhood of $\SO(n)$ with second derivatives uniformly bounded in that neighborhood and in $x$;\\[-0.2cm]
\item \label{W4} 
$W(x,F) \geq  w_p(\dist(F, \SO(n)))$ for every $F\in \R^{n\times n}$ with
\begin{align*}
w_p:[0, \infty) \rightarrow \R, \quad t \mapsto \begin{cases}
    \frac{t^2}{2} &\text{ for } t \in [0,1), \\[0.2cm]
    \frac{t^p}{p}+\frac{1}{2}-\frac{1}{p}, &\text{ for } t \in [1, \infty).
\end{cases}
\end{align*}
\end{enumerate}

To carry out the linearization, we next introduce a suitable rescaling
of~\eqref{Erho} that makes the small-strain regime.  
As is standard, we fix a small parameter
$\epsilon > 0$, replace the external load $\Lcal$ by $\epsilon \Lcal$ and the
complementary values $g$ by $\id + \epsilon g$ truncated outside $\Omega_{2\delta}$, and write the deformation as
$y = \id + \epsilon u$ with the rescaled displacement $u$, so that the actual
displacement $\epsilon u$ is small. After rescaling the energy~\eqref{Erho}
by $\epsilon^{-2}$, we obtain the functionals

\begin{align}\label{Erhoeps}
	\Ecal_{\rho,\eps} (u) = \begin{cases} \displaystyle\frac{1}{\epsilon^2}\int_{\Omega_\delta} W(x, \mathbb{I}+ \epsilon D_{\rho} u) \dx-\Lcal(u)  & \text{ for } u\in   H^{\rho,p}_{g}(\Omega;\R^n) ,\\
		\infty &\text{ otherwise;}
	\end{cases}
\end{align}
note that here $D_{\rho} \id=\mathbb{I}$ is ensured by a normalization condition of the kernel function $\rho$, see~\ref{H5} and~\cite[Lemma 3.1]{BG26}.
For $u$ with bounded nonlocal gradient, a common argument based on Taylor expansion (see e.g. \cite[(1.4)]{DNP02}) implies,
$$ W(x, \Ibb + \epsilon  D_{\rho} u)= \frac{\eps^2}{2} D^2W(x,\Ibb)[e_{\rho}(u),e_{\rho}(u)]+ o(\eps^2) \quad \text{for a.e. $x \in \Omega_{\delta}$},$$
where $e_{\rho}(u)$ denotes the symmetrized nonlocal strain tensor, that is, the symmetrized nonlocal gradient defined by $e_{\rho}(u)= (D_\rho u)_{\rm sym} = \frac{1}{2}(D_{\rho}u+(D_{\rho}u)^{T})$. 
This motivates the definition of the linear functional
\begin{align}\label{eq:limitfunctional}
	\Ecal_{\rho, \textrm{lin}}(u)=\begin{cases}\displaystyle
		\int_{\Omega_\delta}\frac{1}{2} D^2W(x,\mathbb{I})[e_{\rho}(u),e_{\rho}(u)] \dx -\Lcal(u) & \text{ for } u \in H^{\rho,2}_{g}(\Omega;\R^n),\\
		\infty &\text{ otherwise,}
	\end{cases}
\end{align}
where $D^2W(\cdot, \Ibb)$ is a fourth-order elasticity tensor.  
We point out that $\Ecal_{\rho,\rm lin}$ is a heterogeneous version of an
Eringen-type energy, whose homogeneous counterpart is studied in~\cite{BCM23, BG26}.
For a further discussion of the limit model and its relation to models in the
applied literature, we refer to these works and the references therein.

The following nonlocal linearization result shows that $\Ecal_{\rho,\rm lin}$
is indeed the $\Gamma$-limit of $(\Ecal_{\rho,\epsilon})_{\epsilon>0}$ as
$\epsilon \to 0$, i.e., in the limit of small strains. It is the nonlocal
analogue of the linearization of Agostiniani, Dal Maso and
DeSimone~\cite{ADD12}, and hence, in particular, also of~\cite{DNP02}.

\begin{theorem}[Linearization via $\Gamma$-convergence]\label{thm:linearization_fullbc}
Let  $p \in (1,2]$ and $\Ecal_{\rho,\epsilon}, \Ecal_{\rho, \rm{lin}}:H^{\rho, p}(\R^n;\R^n)\to \R_\infty$  for $\epsilon>0$ be as in~\eqref{Erhoeps} and~\eqref{eq:limitfunctional}. 
Then $\Ecal_{\rho , \rm{lin}}$ is the $\Gamma$-limit  of the family $(\Ecal_{\rho,\epsilon})_{\epsilon>0}$ with respect to the weak topology of $H^{\rho,p}(\R^n;\R^n)$ as $\epsilon \to 0$, in formulas, 
\begin{align}\label{formula_Gamma}
\Gamma(w\text{-}H^{\rho, p})\text{-}\lim_{\epsilon \to 0} \Ecal_{\rho,\epsilon} = \Ecal_{\rho, \rm{lin}}. 
\end{align}

 Moreover, any sequence $(u_\eps)_\eps$ in $H^{\rho,p}(\R^n;\R^n)$ such that $\sup_{\eps>0} \Ecal_{\rho,\eps}(u_\eps) < \infty$ has a weakly convergent subsequence in $H^{\rho, p}(\R^n;\R^n)$.
\end{theorem}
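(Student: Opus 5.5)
The abstract points the way: use a local-to-nonlocal transfer. I assume the standard translation tools for nonlocal gradients with kernels as in (H1)–(H5), which appear in Section~\ref{sec:preliminaries}. There is a linear operator $\Qcal_\rho$ (a convolution with a kernel $Q_\rho$) such that for $u\in H^{\rho,p}(\R^n;\R^n)$ the function $v=\Qcal_\rho u$ lies in $W^{1,p}_{\rm loc}$ and satisfies $\nabla v = D_\rho u$. The companion operator $\Qcal_\rho^{-1}$ (or $\Pcal_\rho$) maps $W^{1,p}$ back, so that $D_\rho(\Pcal_\rho v)=\nabla v$.

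Combined with the nonlocal Poincaré inequality on $H^{\rho,p}_g(\Omega;\R^n)$, I expect this to yield an isomorphism. Namely, $u\mapsto \Qcal_\rho u$ is a linear homeomorphism, for both the strong and weak topologies, between the affine space $H^{\rho,p}_g(\Omega;\R^n)$ and an affine Sobolev space $W^{1,p}_{\Qcal_\rho g}(\Omega_\delta;\R^n)$ with local Dirichlet data on $\partial\Omega_\delta$, or equivalently values prescribed on a collar. Under this map the deformation $\id+\epsilon u$ corresponds to $\id+\epsilon\Qcal_\rho u$. This uses $D_\rho\id=\Ibb$ from (H5) and the fact that $\Qcal_\rho$ reproduces affine maps up to a constant. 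The nonlocal energies therefore become
\begin{align*}
\Ecal_{\rho,\epsilon}(u)=\frac{1}{\epsilon^2}\int_{\Omega_\delta}W(x,\Ibb+\epsilon\nabla v)\dx-\widetilde{\Lcal}(v),\qquad v=\Qcal_\rho u,\quad \widetilde\Lcal=\Lcal\circ\Qcal_\rho^{-1},
\end{align*}
and $\Ecal_{\rho,\rm lin}(u)$ becomes the classical linearized energy of $v$ with symmetric gradient $e(v)=e_\rho(u)$. The load $\widetilde\Lcal$ is again bounded and linear on $W^{1,p}$, and the boundary datum $\Qcal_\rho g$ belongs to $W^{1,p}\cap W^{1,\infty}$ on the relevant region.

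The second step is to invoke the local result of Agostiniani, Dal Maso and DeSimone~\cite{ADD12} on the domain $\Omega_\delta$ (Lipschitz) with Dirichlet data on the whole boundary. That result gives the $\Gamma$-convergence with respect to weak $W^{1,p}$ convergence of the local functionals to the linearized functional. It also gives compactness: bounded energy implies, via the rigidity estimate and the growth $w_p$, a bound on $\norm{\nabla v_\epsilon}_{L^p}$. Since weak convergence is preserved by a linear homeomorphism and $\Gamma$-convergence is invariant under such a change of variables, both \eqref{formula_Gamma} and the compactness statement transfer back to $H^{\rho,p}$.

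The main obstacle is the precise matching of the boundary setting. The complementary values $\id+\epsilon g$ prescribed on $\R^n\setminus\Omega$ must translate into a Dirichlet condition for $v$ on a fixed set (say $\Omega_\delta\setminus\Omega$ or $\partial\Omega_{\delta}$) that is exactly of the form $\id+\epsilon\,\tilde g$. Moreover, this must hold uniformly in $\epsilon$, with $\tilde g\in W^{1,\infty}$ as required in~\cite{ADD12}, which also needs bounded boundary data. Care is also needed because $W$ is only defined and coercive on $\Omega_\delta$, while $(W1)$–$(W4)$ hold a.e.\ there. I would first verify that $\Qcal_\rho$ commutes with the affine shift and maps $H^{\rho,\infty}$ into $W^{1,\infty}$. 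If~\cite{ADD12} is only stated for boundary data on a part of the boundary, I would instead use the variant with data on an exterior collar, extending $W$ by a convex quadratic density there; on the collar the energy is constant, so the $\Gamma$-limit is unaffected.
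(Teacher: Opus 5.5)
\textbf{The claimed isomorphism is false.} The map $\Qcal_\rho$ sends $H^{\rho,p}_g(\Omega;\R^n)$ \emph{into} $W^{1,p}_{\Qcal_\rho g}(\Omega_\delta;\R^n)$, which is what Lemma~\ref{le:translationboundaryvalue} proves. It does not map \emph{onto} it, even if $\Qcal_\rho$ is invertible on the full space. The image is not characterized by prescribed values on the collar $\Omega_\delta\setminus\Omega$ either. Indeed, $\Qcal_\rho u-\Qcal_\rho g=Q_\rho*(u-g)$, with $u-g$ supported in $\overline{\Omega}$. By the Titchmarsh--Lions theorem on supports of convolutions, the convex hull of $\supp(Q_\rho*w)$, for compactly supported $w\neq 0$, contains a translate of $\supp Q_\rho$. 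By (H0) this is a ball of fixed positive radius. So $\Qcal_\rho g+\phi$, with $\phi$ a nonzero bump supported in a small ball inside $\Omega_\delta$, lies in $W^{1,p}_{\Qcal_\rho g}(\Omega_\delta;\R^n)$ but not in $\Qcal_\rho(H^{\rho,p}_g(\Omega;\R^n))$. Likewise, $Q_\rho*(u-g)$ does not vanish on $\Omega_\delta\setminus\Omega$ in general. Hence neither your collar formulation nor the fallback with a quadratic density on the collar matches the nonlocal constraint. The complementary-value condition confines you to a proper subspace. Restricting a $\Gamma$-convergent family to a subspace keeps the liminf inequality but not the existence of recovery sequences.

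So only half of your transfer goes through. Compactness and the liminf inequality need only:
\begin{itemize}
\item the inclusion;
\item continuity of $\Qcal_\rho$;
\item the identity $\Ecal_{\rho,\eps}(u)=\Fcal_\eps(\Qcal_\rho u)-\Lcal(u)$;
\item for compactness, the coercivity estimate of \cite{ADD12} plus the nonlocal Poincar\'e inequality, absorbing $\abs{\Lcal(u_\eps)}\le C\norm{u_\eps}_{H^{\rho,p}(\R^n;\R^n)}$ via $p>1$.
\end{itemize}
This is how the paper argues, keeping $\Lcal$ on the nonlocal side instead of forming $\Lcal\circ\Qcal_\rho^{-1}$, which is only defined on the image.

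\textbf{The limsup inequality does not transfer.} Any recovery sequence the local theorem gives for $\Qcal_\rho u$ lies in $W^{1,p}_{\Qcal_\rho g}(\Omega_\delta;\R^n)$. There is no reason for it to be of the form $\Qcal_\rho u_\eps$ with $u_\eps\in H^{\rho,p}_g(\Omega;\R^n)$; the standard construction uses Lipschitz approximations with the same trace, which generically are not.

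The missing idea is to build recovery sequences on the nonlocal side, as the paper does:
\begin{itemize}
\item If $u\in H^{\rho,2}_g(\Omega;\R^n)\cap H^{\rho,\infty}(\R^n;\R^n)$, then $\Qcal_\rho u$ is Lipschitz. The pointwise convergence $\Fcal_\eps(v)\to\Fcal_{\mathrm{lin}}(v)$ at Lipschitz $v$ (Taylor expansion and dominated convergence, \cite[(4.2)]{ADD12}) makes the constant sequence a recovery sequence.
\item For general $u$, use density of $C_c^\infty(\Omega;\R^n)$ in $H^{\rho,2}_0(\Omega;\R^n)$. These functions have bounded nonlocal gradients because $\rho$ has compact support.
\item Combine with continuity of $\Ecal_{\rho,\mathrm{lin}}$ in $H^{\rho,2}$ and a diagonal argument. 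Convergence in $H^{\rho,2}$ gives convergence in $H^{\rho,p}$ because $p\le 2$ and all differences are supported in $\overline{\Omega_\delta}$.
\end{itemize}
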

Our methodology for proving this theorem relies on a transfer argument that reduces the problem to the classical linearization. Specifically, we carry over the $\Gamma$-convergence and compactness results established in \cite{ADD12}, which generalizes those of~\cite{DNP02}.
This approach has two main advantages over a strategy that mimics the
proofs in the local setting by replacing all classical gradients with nonlocal ones. On the one hand, it
avoids having to adapt the arguments step by step to the nonlocal setting, which results in a shorter proof.
On the other hand, similar ideas can potentially be used to deduce nonlocal
counterparts of other linearization results in the literature, e.g.,
for multiwell problems~\cite{ADLP18}, which is an interesting question for future research.  
For other types of problems in (asymptotic) variational analysis, the same transfer strategy has already been applied, for instance, 
 to derive lower semicontinuity and $\Gamma$-convergence, including homogenization and relaxation for nonlocal energies as in~\eqref{Erho}, see e.g.,~\cite{CKS23, CKS24,  KreisbeckSchonberger, KS24}.

Further, we note that a similar argument 
also shows that every recovery sequence for an element of
$H^{\rho,2}_g(\R^n;\R^n)$ converges even strongly in $H^{\rho,p}(\R^n;\R^n)$.
Together with the fact that the limit problem has a unique solution, this yields
the convergence of almost minimizers stated below. Recall that a sequence
$(u_\eps)_{\eps}$ in $H^{\rho,p}(\R^n;\R^n)$ is called a sequence of almost
minimizers of $(\Ecal_{\rho,\eps})_{\eps>0}$ if
\begin{align*}
    \Ecal_{\rho,\eps}(u_\eps) - \inf_{u \in H^{\rho,p}(\R^n;\R^n)} \Ecal_{\rho,\eps}(u) \to 0
    \quad \text{as } \eps \to 0.
\end{align*}

\begin{corollary}[Strong convergence of sequences of almost minimizers]\label{cor:strongconvofminimizers}
Let  $p \in (1,2]$ and $\Ecal_{\rho,\epsilon}, \Ecal_{\rho, \rm{lin}}:H^{\rho, p}(\R^n;\R^n)\to \R_\infty$  for $\epsilon>0$ be as in~\eqref{Erhoeps} and~\eqref{eq:limitfunctional}. 
Then any sequence of almost minimizers of $(\Ecal_{\rho, \eps})_{\eps>0}$ converges strongly to the unique minimizer of $\Ecal_{\rho, \rm{lin}}$ in $H^{\rho,p}(\R^n;\R^n)$ as $\eps\to 0$.
\end{corollary}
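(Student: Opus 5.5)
The corollary follows the standard pattern: combine Theorem~\ref{thm:linearization_fullbc}, uniqueness of the limit minimizer, and an energy-convergence argument that upgrades weak to strong convergence.

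\emph{Step 1: existence and uniqueness for the limit problem.} Using \ref{W1}--\ref{W4}, the tensor $D^2W(x,\Ibb)$ is uniformly positive definite on symmetric matrices. Indeed, \ref{W4} gives $W(x,\Ibb+G)\geq \tfrac12\dist(\Ibb+G,\SO(n))^2$, and $\dist(\Ibb+G,\SO(n)) = \abs{G_{\rm sym}}+O(\abs{G}^2)$. By frame indifference, $D^2W(x,\Ibb)$ vanishes on skew matrices. Hence $\Ecal_{\rho,\rm lin}$ is coercive on $H^{\rho,2}_g(\Omega;\R^n)$ once we have a nonlocal Korn inequality $\norm{u-g}_{H^{\rho,2}}\lesssim \norm{e_\rho(u-g)}_{L^2}$ for functions vanishing outside $\Omega$. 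I take this from the preliminaries, or derive it via the transfer $D_\rho u=\nabla(Q_\rho * u)$ together with the classical Korn and Poincar\'e inequalities. The functional is strictly convex and weakly lower semicontinuous, and the affine space is weakly closed. The direct method therefore yields a unique minimizer $u_0$.

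\emph{Step 2: weak convergence of almost minimizers.} Testing with the constant competitor $g$ (that is, $\id+\eps g$) shows $\inf \Ecal_{\rho,\eps}\le C$, so $\sup_\eps\Ecal_{\rho,\eps}(u_\eps)<\infty$. By the compactness part of Theorem~\ref{thm:linearization_fullbc}, every subsequence has a further subsequence converging weakly in $H^{\rho,p}$ to some $u$. The fundamental theorem of $\Gamma$-convergence then gives $\Ecal_{\rho,\rm lin}(u)\le\liminf \Ecal_{\rho,\eps}(u_\eps)\le \limsup \inf\Ecal_{\rho,\eps}\le \Ecal_{\rho,\rm lin}(v)$ for all $v$, where the last inequality uses recovery sequences. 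Hence $u=u_0$, and by uniqueness the whole family satisfies $u_\eps\weakly u_0$ and $\Ecal_{\rho,\eps}(u_\eps)\to\Ecal_{\rho,\rm lin}(u_0)$.

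\emph{Step 3: strong convergence, the main obstacle.} Since $\Lcal$ is weakly continuous, $\eps^{-2}\int W(x,\Ibb+\eps D_\rho u_\eps)\to \int\tfrac12 D^2W(x,\Ibb)[e_\rho(u_0),e_\rho(u_0)]$. I would transfer the situation to the local setting. Set $v_\eps=Q_\rho*(u_\eps - g)$, extended appropriately, so that $\nabla v_\eps = D_\rho(u_\eps-g)$. The classical argument of \cite{ADD12} (or \cite{DNP02}) then applies to the quadratic part: truncate on the set $\{\eps\abs{D_\rho u_\eps}\le \eps^{1/2}\}$ and use a Taylor expansion with rigidity to show that the rotations are close to the identity. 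From energy convergence and strict convexity of the quadratic form on symmetric parts, one obtains $e_\rho(u_\eps)\ONE_{\text{good}}\to e_\rho(u_0)$ strongly in $L^2$, while the bad set contributes $o(1)$ in $L^p$ by \ref{W4}. Applying Korn's inequality in $L^p$ to $v_\eps-v_0$ (via the transfer and the classical $L^p$ Korn inequality on a ball containing $\Omega_\delta$) gives $D_\rho u_\eps\to D_\rho u_0$ in $L^p$. A nonlocal Poincar\'e inequality then yields $u_\eps\to u_0$ strongly in $H^{\rho,p}$.

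I expect the delicate part to be the lack of a direct $L^2$ bound on $e_\rho(u_\eps)$ on the bad set. This is why convergence is obtained in $L^p$ rather than $L^2$, mirroring the corresponding strong-convergence argument in \cite{ADD12}.
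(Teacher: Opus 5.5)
Your Steps~1 and~2 are sound and agree with the paper. Using the direct method instead of Lax--Milgram is cosmetic. Deriving the nonlocal Korn inequality from $D_\rho=\nabla\Qcal_\rho$, Lemma~\ref{le:translationboundaryvalue}, the classical Korn inequality in $W^{1,2}_0(\Omega_\delta;\R^n)$ and the nonlocal Poincar\'e inequality is a valid substitute for citing \cite{BG26}.

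The gap is in Step~3, where you re-run the local strong-convergence argument by hand. The good-set part is fine. The claim that $e_\rho(u_\eps)\ONE_{\{\eps|D_\rho u_\eps|>\eps^{1/2}\}}\to0$ in $L^p$ ``by \ref{W4}'' does not follow. \ref{W4} only controls $\dist(\Ibb+\eps D_\rho u_\eps,\SO(n))$, and pointwise this says nothing about the symmetric part. Where $\Ibb+\eps D_\rho u_\eps$ is a rotation by a fixed angle, $W=0$ while $|e_\rho(u_\eps)|\sim\eps^{-1}$. Since the bad set has vanishing measure, what you need is $p$-equi-integrability of $(D_\rho u_\eps)_\eps$. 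That requires a global argument: geometric rigidity, the boundary condition to force the optimal rotation to be $O(\eps)$-close to $\Ibb$, and an equi-integrability step. This is precisely the content of the local strong-convergence theorem \cite[Theorem~2.5]{ADD12}, and your sketch does not supply it. You mention rigidity, but the bad-set estimate is then attributed to \ref{W4}, and the equi-integrability step is absent.

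The missing idea, which is how the paper concludes, is that no re-proof is needed: transfer $u_\eps$ itself rather than $u_\eps-g$. With your $v_\eps=Q_\rho*(u_\eps-g)$ one has $\nabla v_\eps=D_\rho u_\eps-D_\rho g$, so the energy is not $\Fcal_\eps(v_\eps)$. By Lemma~\ref{le:translationboundaryvalue}, $\Qcal_\rho u_\eps\in W^{1,p}_{\Qcal_\rho g}(\Omega_\delta;\R^n)$ and $\Qcal_\rho u_\eps\weakly\Qcal_\rho u_0$ in $W^{1,p}(\Omega_\delta;\R^n)$. By \eqref{eq:ErhoepsbyFeps}, your Step~2 and the weak continuity of $\Lcal$,
\[
\Fcal_\eps(\Qcal_\rho u_\eps)=\Ecal_{\rho,\eps}(u_\eps)+\Lcal(u_\eps)\to\Ecal_{\rho,{\rm lin}}(u_0)+\Lcal(u_0)=\Fcal_{\rm lin}(\Qcal_\rho u_0).
\]
So $(\Qcal_\rho u_\eps)_\eps$ is a recovery sequence for \eqref{eq:localGammalimit}. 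Then \cite[Theorem~2.5]{ADD12}, used as a black box, gives $D_\rho u_\eps=\nabla\Qcal_\rho u_\eps\to\nabla\Qcal_\rho u_0=D_\rho u_0$ in $L^p(\Omega_\delta;\R^{n\times n})$. Outside $\Omega_\delta$ both equal $D_\rho g$. Your Korn step becomes superfluous. Your final nonlocal Poincar\'e argument for $u_\eps-u_0\in H^{\rho,p}_0(\Omega;\R^n)$ closes the proof just as well as the compact embedding used in the paper.
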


The remainder of this note is organized as follows.  In Section~\ref{sec:preliminaries}, we introduce the notation and describe the setting in detail. Section~3 is devoted to the proof of the $\Gamma$-convergence and compactness result in Theorem~\ref{thm:linearization_fullbc}. Finally, in Section~4, we prove the convergence of minimizers asserted in Corollary~\ref{cor:strongconvofminimizers}.

\section{Preliminaries}\label{sec:preliminaries} 

In this section, we fix the notation and recall the necessary background on the nonlocal gradient and the associated function spaces. 

\subsection{Notation}
We begin by briefly introducing the notation used throughout this note.
Let $n \in \N$ denote the dimension, and write $\R_\infty = \R \cup \{\infty\}$. 
For $x = (x_1,\ldots,x_n)$, $y = (y_1,\ldots,y_n) \in \R^n$, we denote their scalar product by $x \cdot y = \sum_{j=1}^n x_j y_j$ and their tensor product by $x \otimes y$ with $(x\otimes y)_{ij}= x_i y_j$ for $i, j=1, \ldots, n$, and we write $\abs{x}$ for the Euclidean norm of $x$. 
For any matrix $F \in \R^{n \times n}$, we denote its transpose by $F^T$, its symmetric part by $F_{\rm sym}=\frac{1}{2}(F+F^T)$, and its Frobenius norm by $\abs{F}$. Moreover, we write $\mathbb{I}$ for the identity matrix in $\R^{n\times n}$. The ball centered at $x\in \R^n$ and with radius $r>0$ is denoted by $B_r(x)$ and by $B_r$ when $x=0$. For a set $A \subset \R^n$, we write $A^c$, $\overline{A}$, and $\ONE_A$ for its complement, closure, and indicator function, respectively. 

 The support of a function $f:\R^n\rightarrow \R$ is defined as $$\supp(f)=\overline{\{x \in \R^n: f(x) \neq 0\}},$$ and the convolution of two functions $f,g:\R^n\rightarrow \R$ is given by 
$$(f*g)(x)=\int_{\R^n}f(y)g(x-y) \dy \quad \text{ for  $x \in \R^n$}, $$ whenever the integral exists. 
 For any open set $U \subset \R^n$ and any $p \in [1, \infty]$, we adopt the standard notation  $C_c^{\infty}(U)$, $L^p(U)$, and $W^{1,p}(U)$ for the spaces of compactly supported smooth functions, Lebesgue spaces, and Sobolev spaces, respectively.  The vector-valued spaces $C_c^{\infty}(U;\R^n)$, $L^p(U;\R^n)$, $L^p(U;\R^{n \times n}),$ and $W^{1,p}(U;\R^n)$ are defined componentwise. We define  $W_0^{1,p}(U;\R^n)$ via the closure of $C_c^{\infty}(U;\R^n)$ in $W^{1,p}(U;\R^n)$. If additionally, $U$ is a Lipschitz domain, this space coincides with the space of Sobolev functions with zero trace, and  
for any $h \in W^{1,p}(U;\R^n)$,  we let $
W^{1,p}_{h}(U;\R^n)= h+ W^{1,p}_{0}(U;\R^n)
$ be the space with prescribed boundary conditions.

Finally, whenever a sequence is indexed by $\epsilon$, we mean that $\epsilon$ represents some sequence $(\epsilon_k)_{k}$ of positive numbers tending to zero as $k \to \infty$.

\subsection{Nonlocal gradients and function spaces.}\label{sec:nonlocalgradient}
We now introduce the nonlocal operators and the functional-analytic framework used in the following, restricting ourselves to the results relevant here.
We follow the setting of~\cite{BMS25}, which generalizes earlier studies of Riesz fractional and finite-horizon fractional gradients, e.g.,~\cite{BCMC20Piola, BeCuMC22, Shieh1, Shieh2, COMI2019, CKS23, KreisbeckSchonberger}. Further details and results can be found in~\cite{BMS25}.

Throughout this note, we consider a kernel function $\rho:\R^n\rightarrow [0,\infty]$ satisfying the following hypotheses \ref{H0}-\ref{H4} for some $\delta_0>0$, which, in the case of integrable kernels, coincide with the assumptions in  \cite{BMS25} of the same name:
\begin{enumerate}[label=(H\arabic*)]
\setcounter{enumi}{-1}
\item \label{H0} $\rho\in L^1(\R^n)$ is radial with the property $\liminf_{t \rightarrow 0} \bar{\rho}(t)>0$, where $\bar{\rho}$ stands for the radial representative of $\rho$.
\item The function $f_{\rho}: (0, \infty) \rightarrow \R,  t \mapsto t^{n-2} \bar{\rho}(t)$ is decreasing and there is $\mu \in (0,1)$ such that $\mu f_{\rho}(\frac{t}{2})\geq f_{\rho}(t)$ for $t \in (0, \delta_0)$.
\item $f_\rho$ is smooth and there are constants $C, C_k>0$ with $k \in \N$ such that  it holds 
\begin{equation*}
-C \frac{d}{dt} f_{\rho}(t) \geq \frac{f_{\rho}(t)}{t}\qquad  \text{ and } \qquad \absB{{\frac{d^k}{\dt^k} f_{\rho}(t)}} \leq C_k  \frac{f_{\rho}(t)}{t^k}
\end{equation*}
for any $t \in (0, \delta_0)$.
\item There exists some $\sigma \in (0,1)$ such that $h_{\sigma}:(0, \infty) \rightarrow \R,  t \mapsto t^{n+ \sigma-1} \bar{\rho}(t)$ is almost decreasing on $(0, \delta_0)$.
\item \label{H4} There exists some $\gamma \in (0,1)$ such that $h_{\gamma}:(0, \infty) \rightarrow \R,  t \mapsto t^{n+ \gamma-1} \bar{\rho}(t)$ is almost increasing on $(0, \delta_0)$.
\end{enumerate}
Moreover, for some given $\delta>0$, we additionally impose that
\begin{enumerate}[label=(H\arabic*)]
\setcounter{enumi}{4}
\item \label{H5}$\rho$ is normalized such that $\norm{\rho}_{L^1(\R^n)}=n$ and satisfies $\supp(\rho) \subset \overline{B_{\delta}}$.  
\end{enumerate}
Note that the assumptions (H1)-(H4) are not used directly in this work. They rather provide access to a number of useful technical tools such as a nonlocal Poincar\'e inequality (see \cite[Theorem 4.11 (ii)]{BMS25}) and a compact embedding of the nonlocal Sobolev spaces with zero complementary values into $L^p$ (see \cite[Theorem 4.11 (ii)]{BMS25}).

We recall that the nonlocal gradient of $\phi \in C_c^\infty(\R^n;\R^n)$ is defined by
\begin{equation*}
(D_{\rho} \phi) (x)= \int\limits_{\R^n } \frac{\phi(x) - \phi(y)}{\abs{x-y}} \otimes \frac{x-y}{\abs{x-y}} \rho(x-y) \,dy  \quad \text{for $x\in \R^n$,}
\end{equation*}
and the nonlocal divergence as
\begin{equation*}
(\diverg_\rho \phi)(x)= \int\limits_{\R^n } \frac{\phi(x) - \phi(y)}{\abs{x-y}} \cdot \frac{x-y}{\abs{x-y}} \rho(x-y) \,dy \quad \text{for $x\in \R^n$.} 
\end{equation*}
The definition can be extended distributionally; indeed, we call $V \in L_{\rm{loc}}^1(\R^n;\R^{n \times n})$, the weak nonlocal gradient of $u \in L^1(\R^n;\R^n)+ L^{\infty}(\R^n;\R^n)$ and write $D_{\rho} u=V$, if
\begin{equation*}
\int\limits_{\R^n} V(x) \phi(x) \dx=- \int\limits_{\R^n} u(x) \diverg_{\rho} \phi(x) \dx \quad \text{for all $\phi \in C^{\infty}_{c}(\R^n; \R^n)$.}
\end{equation*}

With this notion of weak nonlocal gradient, one introduces the associated nonlocal Sobolev spaces as follows. 
For any $p \in [1, \infty]$, one considers the space 
\begin{equation*}
H^{\rho,p}(\R^n; \R^n)=\{u \in L^p(\R^n; \R^n): D_{\rho}u \in L^p(\R^n; \R^{n \times n}) \},
\end{equation*}
endowed with the norm $\norm{u}_{H^{\rho, p}(\R^n;\R^n)} = \norm{u}_{L^p(\R^n;\R^n)} + \norm{D_\rho u}_{L^p(\R^n;\R^{n\times n})}$.  Accordingly, a sequence $(u_k)_{k\in \N}$ converges weakly to $u$ in $H^{\rho,p}(\R^n; \R^n)$, if and only if $u_k \weakly u$ in $L^p(\R^n;\R^n)$ and $D_{\rho} u_k \weakly D_{\rho} u$ in $L^p(\R^n;\R^{n \times n})$ as $k\to \infty$. 

To impose volumetric boundary conditions, we consider the following setting. Let $\Omega \subset \R^n$ be a bounded Lipschitz domain such that $\Omega_\delta = \Omega + B_\delta$ also has Lipschitz boundary. For $p \in [1,\infty]$ and $g \in H^{\rho,\infty}(\R^n;\R^n) \cap H^{\rho,p}(\R^n;\R^n)$, we define the complementary-value space
\[
H_g^{\rho,p}(\Omega;\R^n) = \{u \in H^{\rho,p}(\R^n;\R^n) : u = g \text{ a.e.\ in } \Omega^c\}.
\]
Note that, depending on the context, one can consider elements of $H_{g}^{\rho,p}(\Omega;\R^n)$ as functions on $\Omega_
\delta$.  This is motivated by the case of zero boundary values, where the original functions can be recovered by a trivial extension. Further justification is given by the fact that for any $u \in H_{0}^{\rho,p}(\Omega;\R^n)$, it holds $D_{\rho}u =0$ a.e. on $\Omega_\delta^c$, as a consequence of $\supp(\rho)\subset \overline{B_{\delta}}$.

A key technical tool throughout this note is the following translation mechanism, which converts nonlocal gradients into classical ones see~\cite{BMS25, CKS24}.
By~\cite[Lemma~2.9]{CKS24}, for every $p \in (1,\infty)$,
\begin{equation}\label{Qcalrho_prelim}
\Qcal_\rho\colon H^{\rho,p}(\R^n;\R^n) \to W^{1,p}(\R^n;\R^n), \quad u \mapsto Q_\rho * u,
\end{equation}
is a bounded linear map with the property that
    \begin{align}\label{eq:translmechanism}
        D_{\rho} u = \nabla (\Qcal_\rho u)=\nabla (Q_\rho\ast u) \quad \text{ for } u \in H^{\rho,p}(\R^n;\R^n);
    \end{align} 
    the nonlocal potential $Q_{\rho}:\R^n \rightarrow [0, \infty)$ appearing in~\eqref{Qcalrho_prelim} is given as 
    \begin{align*}
    Q_{\rho}(x)= \int_{\abs{x}}^{\infty} \frac{\bar{\rho}(t)}{t} \dt \quad\text{ for $x\in \R^n$.}
    \end{align*}
It belongs to $L^1(\R^n)$ by~\cite[Lemma~2.5\,(iii)]{BMS25}, and since $\rho$ is compactly supported by~\ref{H5}, we have $\supp Q_\rho \subset \overline{B_\delta}$.

We close this preliminary section with a result on the transfer of boundary values. As mentioned in~\cite[Remark~4\,e)]{CKS23} for the finite-horizon fractional kernel, the operator $\Qcal_\rho$ preserves zero boundary conditions, i.e., it maps $H_0^{\rho,p}(\Omega;\R^n)$ into $W_0^{1,p}(\Omega_\delta;\R^n)$ (identifying functions with their restrictions to $\Omega_\delta$). This observation carries over to our setting, and we include a proof for the reader's convenience.

\begin{lemma}\label{le:translationboundaryvalue}
Let $p \in (1,\infty)$, and let $\Omega \subset \R^n$ be a bounded Lipschitz domain such that $\Omega_\delta$ is also Lipschitz. Then, for every $u \in H^{\rho,p}_g(\Omega;\R^n)$, it holds that $\Qcal_\rho u \in W^{1,p}_{\Qcal_\rho g}(\Omega_\delta;\R^n)$. 
\end{lemma}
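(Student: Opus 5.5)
By linearity of $\Qcal_\rho$ we have $\Qcal_\rho u = \Qcal_\rho g + \Qcal_\rho(u-g)$, and $u-g\in H^{\rho,p}_0(\Omega;\R^n)$. So it suffices to show that $\Qcal_\rho v\in W^{1,p}_0(\Omega_\delta;\R^n)$ for every $v\in H^{\rho,p}_0(\Omega;\R^n)$. The first step is to record that $\Qcal_\rho v\in W^{1,p}(\R^n;\R^n)$ by \eqref{Qcalrho_prelim}. Since $v=0$ a.e.\ outside $\Omega$ and $\supp Q_\rho\subset\overline{B_\delta}$, the convolution $Q_\rho * v$ vanishes a.e.\ outside $\overline{\Omega_\delta}$. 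Its restriction to $\Omega_\delta$ is therefore a $W^{1,p}$ function whose zero extension to $\R^n$, namely $\Qcal_\rho v$ itself, lies in $W^{1,p}(\R^n;\R^n)$.

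The second step is to conclude from this that the restriction belongs to $W^{1,p}_0(\Omega_\delta;\R^n)$. Here I use that $\Omega_\delta$ is a bounded Lipschitz domain. For such domains, a function $w\in W^{1,p}(\R^n)$ with $w=0$ a.e.\ on $\R^n\setminus\Omega_\delta$ has restriction in $W^{1,p}_0(\Omega_\delta)$, since the trace on $\partial\Omega_\delta$ computed from inside agrees with the one computed from outside, which is zero. Alternatively, one can argue directly: using a finite partition of unity, and locally translating inward along the transversal direction given by the Lipschitz graph representation, the function becomes compactly supported in $\Omega_\delta$. Mollification then produces $C_c^\infty(\Omega_\delta)$ approximants converging in $W^{1,p}$. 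I would simply cite this standard characterization, e.g.\ the zero-trace/zero-extension characterization of $W^{1,p}_0$ on Lipschitz domains.

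Putting the steps together gives $\Qcal_\rho u-\Qcal_\rho g\in W^{1,p}_0(\Omega_\delta;\R^n)$. Since $\Qcal_\rho g\in W^{1,p}(\R^n;\R^n)$, its restriction lies in $W^{1,p}(\Omega_\delta;\R^n)$, and the claim follows from the definition $W^{1,p}_{h}=h+W^{1,p}_0$.

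The main obstacle is the support claim in the first step. The issue is that $\supp(Q_\rho*v)\subset\overline{\Omega}+\overline{B_\delta}=\overline{\Omega_\delta}$ only gives vanishing outside the closure, not outside the open set. For the characterization in the second step, vanishing a.e.\ on the complement of $\Omega_\delta$ is what is needed. This holds because $\partial\Omega_\delta$ is the Lipschitz boundary of $\Omega_\delta$ and hence has Lebesgue measure zero, which is where the Lipschitz regularity of $\Omega_\delta$ enters.
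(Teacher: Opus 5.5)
Your proof is correct, but it handles the boundary behaviour differently from the paper.

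\textbf{Common ground.} Both arguments reduce to $g=0$ and rely on $\supp Q_\rho\subset\overline{B_\delta}$.

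\textbf{The paper's route.} The paper shows that $\Qcal_\rho u$ is \emph{continuous} on $\Omega_\eta^c$ for $\eta\in(0,\delta)$. This needs the extra kernel bound $\ONE_{B_\eta^c}Q_\rho\in L^\infty(\R^n)$, so that $\Qcal_\rho u=(\ONE_{B_\eta^c}Q_\rho)*u$ there is a convolution of an $L^1$ function with an $L^\infty$ function. The paper then evaluates $\Qcal_\rho u$ pointwise on $\partial\Omega_\delta$, finds zero, and concludes via the zero-trace characterization of $W^{1,p}_0$ on Lipschitz domains.

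\textbf{Your route.} You avoid continuity and the identification of traces of functions that are merely continuous near the boundary. You use only that $\Qcal_\rho v\in W^{1,p}(\R^n;\R^n)$ vanishes a.e.\ outside $\Omega_\delta$. You then invoke the zero-extension characterization of $W^{1,p}_0(\Omega_\delta;\R^n)$, which your translate-and-mollify sketch proves directly. This characterization holds for Lipschitz domains and, more generally, for open sets with the segment property.

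\textbf{What each buys.} Your version is shorter and needs no information on $Q_\rho$ beyond its support. The paper's version yields slightly more: continuity of $\Qcal_\rho u$ away from $\Omega$ and genuine pointwise vanishing on $\partial\Omega_\delta$.

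\textbf{On your flagged obstacle.} It is milder than you suggest. If $x\notin\Omega_\delta$, then $\dist(x,\Omega)\geq\delta$, so $v=0$ a.e.\ on $B_\delta(x)$. Meanwhile $Q_\rho(x-\cdot)$ vanishes outside $\overline{B_\delta(x)}$, and the sphere $\partial B_\delta(x)$ is a null set. Hence $(Q_\rho*v)(x)=0$ for \emph{every} $x\in\Omega_\delta^c$, which is exactly the paper's pointwise evaluation on $\partial\Omega_\delta$. Your measure-zero argument for $\partial\Omega_\delta$ is also valid, so there is no gap. It just means the Lipschitz regularity of $\Omega_\delta$ is really needed only in your second step.
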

\begin{proof} 
Without loss of generality, we may assume that $g = 0$, otherwise, we replace $u$ by $u - g$.
Now let $u \in H_0^{\rho,p}(\Omega;\R^n)$ and $\eta \in (0, \delta)$.  We first show that $\Qcal_\rho u\in W^{1,p}(\R^n;\R^n)$ is continuous on $\Omega_\eta^c$, where $\Omega_\eta = \Omega + B_\eta$, and then determine its trace on $\partial\Omega_\delta$ by evaluating $\Qcal_\rho u$ pointwise.

Since $u \in H_0^{\rho,q}(\Omega;\R^n)$, we have $u = 0$ a.e.\ in $\Omega^c$, and hence $(\ONE_{B_\eta} Q_\rho) * u = 0$ on 
$\Omega_\eta^c$. Consequently,
\begin{align*}
\Qcal_\rho u = Q_\rho * u = (\ONE_{B_\eta^c} Q_\rho) * u \qquad \text{on } \Omega_\eta^c.
\end{align*}
Using that $\rho \in L^1(\R^n)$ is radial and nonnegative, we obtain for every $x \in B_\eta^c$ that
\begin{align*}
Q_\rho(x) \leq \int_\eta^\infty \frac{\bar\rho(t)}{t}\,dt
= \frac{1}{\omega_n} \int_{B_\eta^c} \frac{\rho(y)}{\abs{y}^n}\,dy
\leq \frac{\norm{\rho}_{L^1(\R^n)}}{\sigma_n\,\eta^n},
\end{align*}
where $\sigma_n$ denotes the surface area of the unit sphere in $\R^n$. 
This shows $\ONE_{B_\eta^c} Q_\rho \in L^\infty(\R^n)$,
and it follows that  $\Qcal_\rho u$ is indeed continuous on $\Omega_\eta^c$ as the convolution of an integrable with an essentially bounded function.

    Finally, let $x \in \partial\Omega_\delta \subset \Omega_\eta^c$. As $\supp Q_\rho \subset \overline{B_\delta}$ and $u = 0$ a.e.\ in $B_\delta(x) \subset \Omega^c$, one has
\begin{equation*}
(\Qcal_\rho u)(x) = (Q_\rho * u)(x) = \int_{B_\delta(x)} Q_\rho(x-y)\,u(y)\,dy = 0.
\end{equation*}
As $\Qcal_\rho u$ is continuous on $\Omega_\eta^c$, its trace on $\partial\Omega_\delta$ therefore vanishes. This confirms $\Qcal_\rho u \in W^{1,p}_0(\Omega_\delta;\R^n)$.
\end{proof}

\section{Transfer of $\Gamma$-convergence and compactness results}\label{sec:Gammacompactness}

This section contains the proof of the nonlocal linearization, that is, the $\Gamma$-convergence and compactness of the family of functionals $(\Ecal_{\rho,\eps})_\eps$. The key idea is to reduce the problem to the local results of \cite{ADD12}, which generalize those of \cite{DNP02}.

We start by briefly recalling the setting. Let $p\in(1,2]$ and let $\Omega\subset\R^n$ be a bounded Lipschitz domain such that $\Omega_\delta$ is also Lipschitz for the horizon $\delta>0$. Moreover, $\rho:\R^n\to[0,\infty)$ is a kernel function satisfying~\ref{H0}-\ref{H5}, $g\in H^{\rho,p}(\R^n;\R^n)\cap H^{\rho,\infty}(\R^n;\R^n)$ represents the prescribed boundary data, and $W:\Omega_\delta\times\R^{n\times n}\to[0,\infty]$ is a measurable energy density satisfying~\ref{W1}-\ref{W4}.  The nonlocal energy functionals $\Ecal_{\rho,\eps},\Ecal_{\rho,{\rm lin}}:H^{\rho,p}(\R^n;\R^n)\to\R_\infty$ for $\eps>0$ are defined as in~\eqref{Erhoeps} and~\eqref{eq:limitfunctional}, respectively.

In addition, we introduce the analogous local energy functionals without loading terms, namely $\Fcal_\eps, \Fcal_{\mathrm{lin}}\colon W^{1,p}(\Omega_\delta;\R^n) \to \R_{\infty}$ for $\eps>0$, given by
\begin{align*}
    \Fcal_\eps(v) =  
	\begin{cases}
	\displaystyle \frac{1}{\epsilon} \int_{\Omega_{\delta}} W(x,\mathbb{I}+ \epsilon \nabla v) \dx & \text{ for } v \in W^{1,p}_{\Qcal_{\rho} g}(\Omega_{\delta};\R^n), \\
	\infty  & \text{ otherwise,}
	\end{cases}
\end{align*}
and
\begin{align*}
    	\Fcal_{\textrm{lin}}(v)= 
	\begin{cases}
	\displaystyle \int_{\Omega_\delta}\frac{1}{2} D^2W(x,\mathbb{I})[e(v),e(v)] \dx  & \text{ for } v \in W^{1,2}_{\Qcal_{\rho} g}(\Omega_{\delta};\R^n), \\
		\infty  & \text{ otherwise,}
	\end{cases} 
\end{align*}
where $e(v) = (\nabla v)_{\mathrm{sym}} = \tfrac{1}{2}\bigl(\nabla v + (\nabla v)^{\top}\bigr)$ denotes the symmetrized gradient of $v$.

The main property of these local functionals, which several of the subsequent proofs rely on, is the $\Gamma$-convergence of \cite[Theorem~2.4]{ADD12} (see also \cite[Proposition~2.4]{DNP02}), saying that $\Fcal_{\mathrm{lin}}$ is the $\Gamma$-limit of $(\Fcal_{\eps})_{\eps>0}$ with respect to the weak topology of $W^{1,p}(\Omega_\delta;\R^n)$ as $\eps \to 0$, in formulas, 
\begin{align}\label{eq:localGammalimit}
\Gamma(w\text{-}W^{1,p}(\Omega_\delta;\R^n))\text{-}\lim_{\eps \to 0} \Fcal_{\eps} = \Fcal_{\rm{lin}}.
\end{align}

We also note the following relation between the local and nonlocal energy functionals, induced by the transfer operator $\Qcal_\rho\colon H^{\rho, p}(\R^n;\R^n)\to W^{1,p}(\R^n; \R^n)$; cf.~\eqref{eq:translmechanism} and Lemma~\ref{le:translationboundaryvalue}.
 In fact, for all $u\in H_g^{\rho,p}(\Omega;\R^n)$, it holds that
\begin{align}\label{eq:ErhoepsbyFeps}
\Ecal_{\rho, \eps}(u) = \Fcal_\eps(\Qcal_\rho u) - \Lcal(u) \quad\text{and}\quad \Ecal_{\rho, \rm lin}(u) = \Fcal_{\rm lin}(\Qcal_\rho u) - \Lcal(u),
\end{align}
where the second identity follows from $e_\rho(u) = e(\Qcal_\rho u)$, a direct consequence of~\eqref{eq:translmechanism} and the definition of the symmetrized gradient. 
 Here and in what follows, we identify functions $v\in W^{1,p}(\R^n;\R^n)$ with their restrictions to $\Omega_\delta$, i.e., $v = v|_{\Omega_\delta}$, whenever the context requires it and without indicating this explicitly in the notation. This applies in particular to $\Qcal_\rho u$ for $u\in H^{\rho,p}(\R^n;\R^n)$ in the expressions $\Fcal_\eps(\Qcal_\rho u)$ and $\Fcal_{\rm lin}(\Qcal_\rho u)$ in~\eqref{eq:ErhoepsbyFeps}.

\medskip
As the first ingredient towards proving Theorem \ref{thm:linearization_fullbc}, we show that equicoercivity of the functionals $(\Fcal_\eps)_\eps$ carries over to $(\Ecal_{\rho, \eps})_\eps$. 
\begin{proposition}[Transfer of equicoercivity]\label{prop:compactnesstranslation} 
Let $\Ecal_{\rho,\epsilon}, \Ecal_{\rho, \rm{lin}}:H^{\rho, p}(\R^n;\R^n)\to \R_\infty$  for $\epsilon>0$  be as in~\eqref{Erhoeps} and~\eqref{eq:limitfunctional}. 
 Then, every sequence $(u_\eps)_{\eps}$ in $ H^{\rho,p}(\R^n;\R^n) $ satisfying 
\begin{align}\label{eq:functionalboundedness}
{\sup_{\eps>0} \Ecal_{\rho,\epsilon}(u_\eps)< \infty}\end{align}
has a subsequence converging weakly in $H^{\rho, p}(\R^n;\R^n)$. 
\end{proposition}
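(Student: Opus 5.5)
Given a sequence $(u_\eps)_\eps$ with \eqref{eq:functionalboundedness}, all $u_\eps$ lie in $H^{\rho,p}_g(\Omega;\R^n)$, since otherwise the energy would be infinite. The goal is a uniform bound on $\norm{u_\eps}_{H^{\rho,p}(\R^n;\R^n)}$; reflexivity of $H^{\rho,p}(\R^n;\R^n)$ for $p\in(1,2]$ then gives a weakly convergent subsequence. Note that $H^{\rho,p}$ embeds isometrically into $L^p\times L^p$ as a closed subspace, since the distributional relation defining $D_\rho$ is stable under weak limits. Outside $\Omega$ we have $u_\eps=g$, and outside $\Omega_\delta$ we have $D_\rho u_\eps = D_\rho g$. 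So it suffices to bound $\norm{u_\eps - g}_{L^p(\Omega)}$ and $\norm{D_\rho u_\eps}_{L^p(\Omega_\delta)}$.

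For the gradient bound I would use the lower bound \ref{W4}, as in \cite{ADD12}. Set $F_\eps=\Ibb+\eps D_\rho u_\eps$. Since $w_p(t)\geq c\min\{t^2,t^p\}\ge c(t^p - 1)$ and $\eps^{-2}w_p(\eps s)\gtrsim \min\{s^2, \eps^{p-2}s^p\}\ge \min\{s^2,s^p\}$ for $\eps\le1$, we obtain
\[
\int_{\Omega_\delta}\min\{\dist(F_\eps,\SO(n))^2/\eps^2,\ \dist(F_\eps,\SO(n))^p/\eps^p\}\dx \le C\Bigl(1+\abs{\Lcal(u_\eps)}\Bigr).
\]
This controls $\norm{\eps^{-1}\dist(F_\eps,\SO(n))}_{L^p(\Omega_\delta)}^p\le C(1+\abs{\Lcal(u_\eps)})$, using $\min\{s^2,s^p\}\ge s^p-1$ on the bounded set $\Omega_\delta$.

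The main obstacle is the rigidity step, which passes from the distance to $\SO(n)$ to a bound on $D_\rho u_\eps$. Here I use the transfer mechanism: by \eqref{eq:translmechanism}, $F_\eps=\nabla(\id+\eps\Qcal_\rho u_\eps)$ on $\Omega_\delta$, and by Lemma~\ref{le:translationboundaryvalue}, $v_\eps:=\Qcal_\rho u_\eps\in W^{1,p}_{\Qcal_\rho g}(\Omega_\delta;\R^n)$. Moreover $\Fcal_\eps(v_\eps)$ is exactly the integral term appearing in $\Ecal_{\rho,\eps}(u_\eps)$. Rather than redoing rigidity, I would invoke the local compactness of \cite{ADD12} (via Friesecke--James--M\"uller rigidity, with boundary data $\Qcal_\rho g\in W^{1,\infty}$ by $g\in H^{\rho,\infty}$). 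In quantitative form, it yields $\norm{\nabla v_\eps}_{L^p(\Omega_\delta)}^p\le C(1+\Fcal_\eps(v_\eps))$, that is,
\[
\norm{D_\rho u_\eps}_{L^p(\Omega_\delta)}^p\le C\bigl(1+\abs{\Lcal(u_\eps)}\bigr).
\]
I would extract this quantitative estimate from the proof in \cite{ADD12}, since their stated compactness is only qualitative; if that proves difficult, I would argue by contradiction with a normalized sequence.

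To close the argument, I control $\Lcal(u_\eps)$ and the $L^p$ norm with the nonlocal Poincar\'e inequality \cite[Theorem 4.11]{BMS25} applied to $u_\eps-g\in H^{\rho,p}_0(\Omega;\R^n)$. This gives $\norm{u_\eps-g}_{L^p}\le C\norm{D_\rho(u_\eps-g)}_{L^p}$, hence
\[
\abs{\Lcal(u_\eps)}\le C\bigl(1+\norm{D_\rho u_\eps}_{L^p(\Omega_\delta)}\bigr).
\]
Combining with the previous estimate yields $X^p\le C(1+X)$ for $X=\norm{D_\rho u_\eps}_{L^p(\Omega_\delta)}$. Since $p>1$, $X$ is bounded, so the whole $H^{\rho,p}$ norm is uniformly bounded, and weak sequential compactness concludes the proof.
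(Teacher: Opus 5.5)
Your proposal is correct and follows essentially the same route as the paper. You transfer to $v_\eps=\Qcal_\rho u_\eps\in W^{1,p}_{\Qcal_\rho g}(\Omega_\delta;\R^n)$ and apply the quantitative coercivity estimate $\norm{\nabla v_\eps}_{L^p}^p\le C(1+\Fcal_\eps(v_\eps))$, which the paper cites directly in this form from \cite[Theorem~2.3]{ADD12}; the nonlocal Poincar\'e inequality and the boundedness of $\Lcal$ then give $X^p\le C(1+X)$, and reflexivity yields the weakly convergent subsequence. Your preliminary estimate derived from \ref{W4} is harmless but unnecessary, since that estimate is already contained in the cited local result.
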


\begin{proof}
Throughout this proof, $C>0$ denotes a generic constant independent of $\eps$, which may change from line to line.

Let $(u_\eps)_{\eps}$ in $ H^{\rho,p}(\R^n;\R^n)$ satisfy~\eqref{eq:functionalboundedness}, then $u_\eps\in H_g^{\rho,p}(\Omega;\R^n)$ for all $\eps$.   
By \cite[Theorem~2.3]{ADD12}, there is a constant $C>0$ such that every sequence $(v_\eps)_\eps$ in $W^{1,p}_{\Qcal_\rho g}(\Omega_{\delta};\R^n)$ satisfies
\begin{align}\label{est_Feps88}
\norm{\nabla v_{\eps}}_{L^p(\Omega_{\delta};\R^{n \times n})}^p \leq C\big(1+ \Fcal_{\eps}(v_\eps)\big)
\end{align}
for all sufficiently small $\eps>0$. In what follows, we assume that $\eps$ is small enough so that~\eqref{est_Feps88} holds. 
Since $\Qcal_{\rho} u_\eps\in W^{1,p}_{\Qcal_\rho g}(\Omega_{\delta};\R^n)$ for all $\eps$ due to~\eqref{Qcalrho_prelim} and Lemma~\ref{le:translationboundaryvalue}, we can apply~\eqref{est_Feps88} to $v_\eps = \Qcal_{\rho} u_\eps$. Together with~\eqref{eq:translmechanism}, \eqref{eq:ErhoepsbyFeps}, the boundedness of the linear functional $\Lcal$, and the uniform bound from~\eqref{eq:functionalboundedness}, this yields
\begin{align*}
\norm{D_{\rho} u_{\eps}}_{L^p(\Omega_{\delta};\R^{n \times n})}^p
&= \norm{\nabla (\Qcal_\rho u_{\eps})}_{L^p(\Omega_{\delta};\R^{n \times n})}^p
\leq C\big(1+ \Fcal_{\eps}(\Qcal_\rho u_{\eps})\big)\\
&= C\big(1+ \Ecal_{\rho,\eps}(u_{\eps}) - \Lcal(u_{\eps})\big)
\leq C\big(1 + \norm{u_{\eps}}_{H^{\rho,p}(\R^n;\R^n)}\big). 
\end{align*}

Finally, we combine this with the nonlocal Poincar\'e inequality (see \cite[Theorem 4.11\,(i)]{BMS25}) to obtain
\begin{align*}
     \norm{u_{\eps}}_{H^{\rho,p}(\R^n; \R^n)}^p\leq  
C\Bigl(\norm{D_{\rho}u_{\eps}}_{L^p(\Omega_{\delta};\R^{n \times n})}^p+\norm{g}_{H^{\rho,p}(\R^n; \R^{n})}^p \Bigr)
     \leq C \bigl(1+ \norm{u_{\eps}}_{H^{\rho,p}(\R^n;\R^n)}\bigr).
\end{align*} 
Since $p>1$, we conclude that the sequence $(u_\eps)_\eps$ is bounded in $H^{\rho,p}(\R^n;\R^n)$, which, by reflexivity, yields the stated equicoercivity of $(\Ecal_{\rho,\eps})_\eps$.
\end{proof}

 Next we now turn to transferring the local $\Gamma$-limit to the nonlocal setting, starting with the $\liminf$-inequality.

\begin{proposition}[Transfer of the liminf-inequality]\label{prop:Gammaliminf} Let $\Ecal_{\rho,\epsilon}, \Ecal_{\rho, \rm{lin}}:H^{\rho, p}(\R^n;\R^n)\to \R_\infty$  for $\epsilon>0$ be as in~\eqref{Erhoeps} and~\eqref{eq:limitfunctional}.  
It holds for every $(u_\eps)_{\eps}$ and $u$ in $H^{ \rho,p}_{g}(\Omega;\R^n)$ with $u_\eps \weakly u$ in $H^{\rho, p}(\R^n;\R^n)$ that
\begin{align*}
\liminf_{\eps \rightarrow 0} \Ecal_{\rho, \epsilon}(u_\eps) \geq \Ecal_{\rho, \rm{lin}}(u).    
\end{align*}
\end{proposition}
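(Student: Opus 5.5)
We reduce the liminf-inequality to the local one, \eqref{eq:localGammalimit}, by means of the transfer operator $\Qcal_\rho$. Take $(u_\eps)_\eps$ and $u$ in $H^{\rho,p}_g(\Omega;\R^n)$ with $u_\eps \weakly u$ in $H^{\rho,p}(\R^n;\R^n)$. If $\liminf_{\eps\to0}\Ecal_{\rho,\eps}(u_\eps)=\infty$, there is nothing to prove. Otherwise, we pass to a subsequence (not relabeled) along which the liminf is attained as a limit.

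\textbf{Step 1: transfer of convergence.} Set $v_\eps := \Qcal_\rho u_\eps$ and $v := \Qcal_\rho u$. By \eqref{Qcalrho_prelim}, $\Qcal_\rho\colon H^{\rho,p}(\R^n;\R^n)\to W^{1,p}(\R^n;\R^n)$ is bounded and linear, hence weak-to-weak continuous. Therefore $v_\eps \weakly v$ in $W^{1,p}(\R^n;\R^n)$. Restriction to $\Omega_\delta$ is also bounded and linear, so $v_\eps \weakly v$ in $W^{1,p}(\Omega_\delta;\R^n)$.

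Lemma~\ref{le:translationboundaryvalue} gives $v_\eps, v \in W^{1,p}_{\Qcal_\rho g}(\Omega_\delta;\R^n)$. This is consistent with the boundary conditions built into $\Fcal_\eps$ and $\Fcal_{\rm lin}$. It is not strictly needed for the $\Gamma$-liminf, since that inequality holds for arbitrary weakly converging sequences in $W^{1,p}(\Omega_\delta;\R^n)$.

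\textbf{Step 2: apply the local liminf.} By the $\Gamma$-liminf part of \eqref{eq:localGammalimit} (from \cite[Theorem~2.4]{ADD12}),
\[
\liminf_{\eps\to0}\Fcal_\eps(v_\eps) \geq \Fcal_{\rm lin}(v).
\]
The loading term converges, $\Lcal(u_\eps)\to\Lcal(u)$, because $\Lcal$ is a bounded linear functional on $H^{\rho,p}(\R^n;\R^n)$ and is therefore weakly continuous.

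\textbf{Step 3: combine via \eqref{eq:ErhoepsbyFeps}.} The identity \eqref{eq:ErhoepsbyFeps} gives
\[
\liminf_{\eps\to0}\Ecal_{\rho,\eps}(u_\eps) = \liminf_{\eps\to0}\Fcal_\eps(v_\eps) - \Lcal(u) \geq \Fcal_{\rm lin}(\Qcal_\rho u) - \Lcal(u).
\]
It remains to identify the right-hand side with $\Ecal_{\rho,\rm lin}(u)$.

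\textbf{Main obstacle: the case distinction in the limit.} The limit functional is finite only on $H^{\rho,2}_g$, and $\Fcal_{\rm lin}$ only on $W^{1,2}_{\Qcal_\rho g}(\Omega_\delta;\R^n)$.

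If $\Fcal_{\rm lin}(v)=\infty$, then the right-hand side above is $+\infty$. So the liminf on the left is infinite as well, and the inequality holds trivially (if $\Ecal_{\rho,\rm lin}(u)=\infty$ too, the claim is still immediate).

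If $\Fcal_{\rm lin}(v)<\infty$, then $v\in W^{1,2}(\Omega_\delta;\R^n)$. Hence $D_\rho u = \nabla v \in L^2(\Omega_\delta;\R^{n\times n})$ by \eqref{eq:translmechanism}. We then need $u \in H^{\rho,2}_g(\Omega;\R^n)$, which requires three checks:
\begin{itemize}
\item Outside $\Omega_\delta$: $D_\rho u = D_\rho g$ there, and this lies in $L^2$ because $g \in H^{\rho,p}\cap H^{\rho,\infty}$ and $p\le 2$.
\item Integrability of $u$ on $\Omega_\delta$: $u \in L^2(\Omega_\delta)$ follows from the nonlocal Poincaré inequality \cite[Theorem 4.11]{BMS25} with exponent $2$, applied to $u-g$.
\item Integrability of $u$ outside $\Omega$: there $u = g \in L^p\cap L^\infty \subset L^2$.
\end{itemize}
With $u\in H^{\rho,2}_g(\Omega;\R^n)$, the identity \eqref{eq:ErhoepsbyFeps} for $\Ecal_{\rho,\rm lin}$ gives $\Fcal_{\rm lin}(\Qcal_\rho u)-\Lcal(u)=\Ecal_{\rho,\rm lin}(u)$, which completes the argument.

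I expect the Poincaré step to be the main technical point. If it fails, it is enough to note that finiteness forces $e_\rho(u)\in L^2$ and to define $\Ecal_{\rho,\rm lin}$ accordingly.
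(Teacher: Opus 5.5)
Your Steps 1--3 are the paper's proof: you transfer weak convergence through the bounded linear operator $\Qcal_\rho$, apply the local $\Gamma$-liminf of \cite{ADD12} to $\Qcal_\rho u_\eps \weakly \Qcal_\rho u$ in $W^{1,p}(\Omega_\delta;\R^n)$, use weak continuity of $\Lcal$, and convert back via \eqref{eq:ErhoepsbyFeps}.

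The paper simply asserts the second identity in \eqref{eq:ErhoepsbyFeps} on all of $H^{\rho,p}_g(\Omega;\R^n)$, so your case distinction is a worthwhile extra check that the paper skips. One step in it is circular as phrased. Applying the exponent-$2$ Poincar\'e inequality to $w=u-g$ presupposes $w\in H^{\rho,2}_0$, which is exactly what you want to prove. The repair is to mollify: $w_\eta=\phi_\eta*w\in C_c^\infty$ satisfies $D_\rho w_\eta=\phi_\eta*D_\rho w$. Poincar\'e on a fixed ball then gives $\sup_\eta\norm{w_\eta}_{L^2}\le C\norm{D_\rho w}_{L^2}<\infty$, and weak compactness together with $w_\eta\to w$ in $L^p$ yields $w\in L^2$. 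Your fallback of redefining $\Ecal_{\rho,\rm lin}$ is not a legitimate option, but it is not needed once this is fixed.
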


\begin{proof}
Let $(u_k)_{k}$ be a sequence in $H^{ \rho,p}_{g}(\Omega;\R^n)$ such that $u_k \weakly u$ in $H^{\rho, p}(\R^n;\R^n)$ for some $u \in H^{\rho,p}_{g}(\Omega;\R^n)$, and let $(\epsilon_k)_{k}$ be a sequence of positive numbers tending to zero as $k \rightarrow \infty$. 
Since $\Qcal_{\rho}$ from \eqref{eq:translmechanism} is linear and bounded, it follows (after restriction to $\Omega_\delta$) that
\begin{align*}
 \Qcal_{\rho} u_k \weakly \Qcal_{\rho} u \quad\text{in $W^{1,p}(\Omega_\delta;\R^n)$.}
\end{align*}

By the liminf inequality associated with the $\Gamma$-limit~\eqref{eq:localGammalimit},  every sequence $(v_k)_{k\in\N}$ in $W^{1,p}(\Omega_{\delta};\R^n)$ with $v_k \weakly v$ in $W^{1,p}(\Omega_\delta;\R^n)$ satisfies
\begin{align}\label{eq:localliminfinequality}
 \liminf_{k \to \infty}\Fcal_{\epsilon_k}(v_k) \geq \Fcal_{\mathrm{lin}}(v).
\end{align}
Hence,  applying \eqref{eq:localliminfinequality} with $v_k = \Qcal_\rho u_k$ and using \eqref{eq:ErhoepsbyFeps}, we conclude together with the weak continuity of $\Lcal$ as a bounded linear functional on $H^{\rho,p}(\R^n;\R^n)$ that
\begin{align*}
 \liminf_{k \to \infty}\Ecal_{\rho, \epsilon_k}(u_k)
 = \liminf_{k \to \infty}\Fcal_{\epsilon_k}(\Qcal_\rho u_k) - \lim_{k\to\infty}\Lcal(u_k)
 \geq \Fcal_{\mathrm{lin}}(\Qcal_\rho u) - \Lcal(u)
 = \Ecal_{\rho,\mathrm{lin}}(u).
\end{align*}
This proves the claim.
\end{proof}

To prove the remaining limsup inequality, we construct a recovery sequence, which even converges strongly.

\begin{proposition}[Construction  of recovery sequences]\label{prop:Gammalimsup}
Let $\Ecal_{\rho,\epsilon}, \Ecal_{\rho, \rm{lin}}:H^{\rho, p}(\R^n;\R^n)\to \R_\infty$  for $\epsilon>0$ be as in~\eqref{Erhoeps} and~\eqref{eq:limitfunctional}. 
For every $u \in H^{\rho,2}_g(\Omega;\R^n)$, there exists $(u_\eps)_{\eps} $ in $H^{\rho,p}_g(\Omega;\R^n)$ such that $u_\eps \to u$ in $H^{\rho,p}(\R^n;\R^n)$ as $\eps \rightarrow 0$ and 
$$ \lim_{\eps \rightarrow 0} \Ecal_{\rho,\eps}(u_\eps)= \Ecal_{\rho,\rm{lin}}(u).$$
\end{proposition}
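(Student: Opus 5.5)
The plan is to avoid pulling back a local recovery sequence through $\Qcal_\rho$, since it is not clear that $\Qcal_\rho$ is onto $W^{1,p}_{\Qcal_\rho g}(\Omega_\delta;\R^n)$. Instead, I would imitate the structure of the local proof in \cite{DNP02, ADD12}. First I treat $u$ with bounded nonlocal gradient using the constant recovery sequence $u_\eps = u$. The general case then follows by a density and diagonal argument.

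\emph{Step 1: bounded nonlocal gradients.} Let $u \in H^{\rho,2}_g(\Omega;\R^n)$ with $D_\rho u \in L^\infty(\R^n;\R^{n\times n})$, and set $u_\eps = u$. Since $u - g$ vanishes outside the bounded set $\Omega$, we have $u \in H^{\rho,p}_g(\Omega;\R^n)$. For $\eps$ small, $\eps \norm{D_\rho u}_{L^\infty}$ is so small that $\Ibb + \eps D_\rho u(x)$ stays in the neighborhood of $\SO(n)$ from \ref{W3} for a.e.\ $x$. Taylor's theorem with the uniform bound on $D^2W$, together with $W(x,\Ibb)=0$ and $DW(x,\Ibb)=0$ (a minimum), gives
\begin{align*}
\frac{1}{\eps^2} W(x,\Ibb+\eps D_\rho u) \to \frac12 D^2W(x,\Ibb)[D_\rho u, D_\rho u] \quad \text{for a.e. } x \in \Omega_\delta,
\end{align*}
with a uniform bound $C\abs{D_\rho u}^2$. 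Dominated convergence on the bounded set $\Omega_\delta$ then yields convergence of the energies. Finally, frame indifference \ref{W1} implies that $D^2W(x,\Ibb)[F,F]$ depends only on $F_{\rm sym}$, which is the standard observation from \cite{DNP02}. So the limit equals $\Ecal_{\rho,\rm lin}(u)$. Equivalently, this step can be phrased through \eqref{eq:ErhoepsbyFeps}, since $\nabla \Qcal_\rho u = D_\rho u$ is bounded.

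\emph{Step 2: density.} For general $u \in H^{\rho,2}_g(\Omega;\R^n)$, I approximate $u - g \in H^{\rho,2}_0(\Omega;\R^n)$ in $H^{\rho,2}$ by functions $\phi_j \in C_c^\infty(\Omega;\R^n)$, and set $u_j = g + \phi_j$. This uses density of $C_c^\infty(\Omega;\R^n)$ in $H^{\rho,2}_0(\Omega;\R^n)$ for Lipschitz $\Omega$, as available in \cite{BMS25}. Since $g \in H^{\rho,\infty}$ and $D_\rho \phi_j$ is bounded, each $u_j$ falls under Step 1.

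Moreover, $u_j - u$ vanishes outside $\Omega$ and $D_\rho(u_j-u)$ vanishes outside $\Omega_\delta$. Hence H\"older's inequality on these bounded sets turns $H^{\rho,2}$-convergence into $u_j \to u$ in $H^{\rho,p}(\R^n;\R^n)$. The functional $\Ecal_{\rho,\rm lin}$ is continuous with respect to strong $H^{\rho,2}$-convergence of functions in $H^{\rho,2}_g(\Omega;\R^n)$, since it is a bounded quadratic form in $e_\rho(u)$ on $\Omega_\delta$ plus the term $\Lcal$. Therefore $\Ecal_{\rho,\rm lin}(u_j) \to \Ecal_{\rho,\rm lin}(u)$.

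\emph{Step 3: diagonal argument.} For each $j$ choose $\eps_j$ such that
\begin{align*}
\abs{\Ecal_{\rho,\eps}(u_j) - \Ecal_{\rho,\rm lin}(u_j)} < 1/j \quad \text{for all } \eps < \eps_j,
\end{align*}
with $\eps_j$ decreasing to $0$. Then set $u_\eps = u_j$ for $\eps \in [\eps_{j+1},\eps_j)$. This sequence converges strongly to $u$ in $H^{\rho,p}$ and its energies converge to $\Ecal_{\rho,\rm lin}(u)$.

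The main obstacle I expect is Step 2, namely securing the density statement in the nonlocal space with complementary values. If it is not directly available in \cite{BMS25}, I would first try mollification combined with a cutoff and an inward dilation of $\Omega$ (using the Lipschitz regularity). The key facts are that $D_\rho$ commutes with convolution and that it is continuous with respect to translations.
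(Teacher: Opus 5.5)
Your proposal is correct and follows essentially the same route as the paper: a constant recovery sequence for $u$ with bounded nonlocal gradient, then density of $C_c^\infty(\Omega;\R^n)$ in $H^{\rho,2}_0(\Omega;\R^n)$ (this density is exactly \cite[Theorem~3.9\,$(iii)$]{BMS25}, so your fallback mollification argument is not needed), continuity of $\Ecal_{\rho,\rm lin}$, and a diagonal argument, with $H^{\rho,p}$-convergence obtained from $H^{\rho,2}$-convergence via H\"older on the bounded sets $\Omega$ and $\Omega_\delta$. The only differences are cosmetic: you verify Step~1 by a direct Taylor expansion and dominated convergence, where the paper instead invokes the local constant-recovery result \cite[(4.2)]{ADD12} through $\Qcal_\rho$; and you shift by $g$ explicitly via $u_j = g + \phi_j$, where the paper reduces to $g=0$.
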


\begin{proof} 
Let $(\epsilon_k)_{k}$ be a sequence of positive numbers tending to zero as $k \rightarrow \infty$. We show in Step~1 that for any $u \in H_g^{\rho, 2}(\Omega;\R^n) \cap H^{\rho, \infty}(\R^n;\R^n)$, the constant sequence is a recovery sequence for $u$, and then conclude in Step~2 by a density argument.\smallskip

\textit{Step~1.} Fix $u \in H_g^{\rho, 2}(\Omega;\R^n) \cap H^{\rho, \infty}(\R^n;\R^n)$. Using  \eqref{eq:translmechanism} (applied with $p=2$),  we infer that $\nabla (\Qcal_{\rho}u)=D_\rho u \in L^{\infty}(\R^n;\R^{n \times n})$. Further, the Young convolution inequality combined with $Q_{\rho} \in L^1(\R^n)$ implies  ~$\Qcal_{\rho} u=Q_{\rho}*u \in L^{\infty}(\R^n;\R^n) $. Hence, we have $\Qcal_\rho u\in W^{1, \infty}(\R^n;\R^n)$, so that $\Qcal_{\rho} u \in W_{\Qcal_\rho g}^{1,\infty}(\Omega_{\delta};\R^n)$ (after identification with the restriction to $\Omega_\delta$). 
 Since every function in $W_{\Qcal_\rho g}^{1,\infty}(\Omega_{\delta};\R^n)$ has a constant recovery sequence for the local $\Gamma$-limit~\eqref{eq:localGammalimit} according to ~\cite[(4.2)]{ADD12}, we deduce
\begin{align*}
\lim_{k\rightarrow \infty} \Fcal_{\eps_k} (\Qcal_\rho u) 
= \Fcal_{\rm lin} (\Qcal_\rho u).
\end{align*} 
This, together with~\eqref{eq:ErhoepsbyFeps}, shows $\lim_{k\rightarrow \infty} \Ecal_{\rho,\eps_k} (u)= \Ecal_{\rho,\rm{lin}} (u)$,  as claimed.\smallskip

\textit{Step~2. } We assume without loss of generality that $g = 0$, as the general case follows by applying the same argument to the shifted functions $u - g$.
 Now consider any $u \in H^{\rho,2}_0(\Omega;\R^n)$. 
Since $\rho$ has compact support, \cite[Proposition~2.6\,$(i)$]{BMS25} gives $D_\rho\phi \in C_c^\infty(\R^n;\R^n)$ for every $\phi \in C_c^\infty(\R^n;\R^n)$, and hence $C_c^\infty(\Omega;\R^n) \subset H_0^{\rho,2}(\Omega;\R^n) \cap H^{\rho,\infty}(\R^n;\R^n)$, upon identifying functions with their zero extension.   Recalling from~\cite[Theorem~3.9\,$(iii)$]{BMS25} that $C_c^\infty(\Omega;\R^n)$ is dense in $H_0^{\rho,2}(\Omega;\R^n)$, there exists a sequence $(\phi_j)_{j}$ in $C_c^\infty(\Omega;\R^n)$ with 
\begin{align}\label{approxvarphik}
\phi_j \to u \qquad \text{in $H^{\rho,2}(\R^n;\R^n)$ as $j \to \infty$.}
\end{align}

  Let us observe that, due to \ref{W3}, there exists a constant $C > 0$ such that
\begin{align*}
\biggl|\int_{\Omega_\delta} D^2W(x,\mathbb{I})[e_\rho(v), e_\rho(w)]\,dx\biggr| 
&\leq C \norm{e_\rho(v)}_{L^2(\R^n;\R^{n\times n})} \norm{e_\rho(w)}_{L^2(\R^n;\R^{n\times n})} \\ 
&\leq C \norm{D_\rho v}_{L^2(\R^n;\R^{n\times n})} \norm{D_\rho w}_{L^2(\R^n;\R^{n\times n})}
\end{align*}
for all $v, w \in H^{\rho,2}(\R^n;\R^n)$; hence, the bilinear form $\Bcal_\rho \colon H_0^{\rho,2}(\Omega;\R^n) \times H_0^{\rho,2}(\Omega;\R^n) \to \R$ defined by
\begin{equation}\label{eq:defaW}
\Bcal_\rho(v,w) = \frac{1}{2}\int_{\Omega_\delta} D^2W(x,\mathbb{I})[e_\rho(v), e_\rho(w)]\,dx
\end{equation}
is bounded. Consequently, the map $H_0^{\rho,2}(\Omega;\R^n) \to \R$, $v \mapsto \Bcal_\rho(v,v)$, is continuous as well. Together with the continuity of $\Lcal$ and~\eqref{approxvarphik}, we then conclude 
\begin{align}\label{eq:limitElinphi}
\lim_{j\to\infty} \Ecal_{\rho,\mathrm{lin}}(\phi_j)
= \lim_{j\to\infty} \Bcal_\rho(\phi_j,\phi_j) - \lim_{j\to\infty} \Lcal(\phi_j)
= \Bcal_\rho(u,u) - \Lcal(u)
= \Ecal_{\rho,\mathrm{lin}}(u).
\end{align}

 Finally, a standard diagonalization argument (cf.~Attouch's lemma \cite[Lemma 1.15 and Corollary 1.16]{At84}) merging~\eqref{eq:limitElinphi} with Step~1 for each $\varphi_j$ gives rise to a sequence $(j_k)_{k}$ of indices such that 
\begin{align*}
\lim_{k\rightarrow \infty} \Ecal_{\rho,\epsilon_k}(\phi_{j_k})=\lim_{k\rightarrow \infty} \Ecal_{\rho,\rm{lin}}(\phi_{j_k})=\Ecal_{\rho,\rm{lin}}(u).
\end{align*}  
It remains to show that $\phi_{j_k} \to u$ in $H^{\rho,p}(\R^n;\R^n)$ as $k\to \infty$. Since $p \leq 2$, this follows from the convergence in $H^{\rho,2}(\R^n;\R^n)$, accounting for the boundary conditions.
\end{proof}

\begin{proof}[Proof of Theorem~\ref{thm:linearization_fullbc}]
The proof now follows directly from the preceding results. The $\liminf$-inequality from Proposition~\ref{prop:Gammaliminf} and the existence of recovery sequences from Proposition~\ref{prop:Gammalimsup} together yield the stated $\Gamma$-convergence~\eqref{formula_Gamma}, while the equicoercivity of the functionals $(\Ecal_{\rho, \eps})_\eps$ is established in Proposition~\ref{prop:compactnesstranslation}.
\end{proof}

\section{Proof of convergence of minimizers}

This final section is concerned with the proof of the strong convergence of minimizing sequences from Corollary~\ref{cor:strongconvofminimizers}. Throughout, we work in the same setting as in Section~\ref{sec:Gammacompactness}. 
Our proof strategy mimicks the standard argument from the local theory (see Theorems~2.2 in \cite{DNP02, ADD12}), which relies on the strong convergence of recovery sequences and the uniqueness of the minimizer of the limit functional. We begin with the former, which we again prove by a transfer argument, that is, by reducing the problem to its local analogue~\cite[Theorem~2.5]{ADD12}.

\begin{lemma}[Strong convergence of recovery sequences]\label{le:strongconvergencerecovery}
Let $\Ecal_{\rho,\epsilon}, \Ecal_{\rho, \rm{lin}}:H^{\rho, p}(\R^n;\R^n)\to \R_\infty$  for $\epsilon>0$ be as in~\eqref{Erhoeps} and~\eqref{eq:limitfunctional}. 
For each $u \in H^{\rho,2}_g(\Omega;\R^n)$, every recovery sequence $(u_k)_{k}$ for $u$ associated with the $\Gamma$-limit \eqref{formula_Gamma} converges strongly in $H^{\rho,p}(\R^n;\R^n)$.
\end{lemma}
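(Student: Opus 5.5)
Let $u \in H^{\rho,2}_g(\Omega;\R^n)$ and let $(u_k)_k$ be a recovery sequence for $u$ along $\epsilon_k \to 0$. That is, $u_k \weakly u$ in $H^{\rho,p}(\R^n;\R^n)$ and $\Ecal_{\rho,\epsilon_k}(u_k) \to \Ecal_{\rho,\rm lin}(u)$. The plan is to transfer the question to the local setting via $\Qcal_\rho$, apply \cite[Theorem~2.5]{ADD12}, and then return to the nonlocal side using \eqref{eq:translmechanism} and the nonlocal Poincar\'e inequality.

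\textbf{Step 1: reduce to a local recovery sequence.} Since the energies are finite for large $k$, we have $u_k \in H^{\rho,p}_g(\Omega;\R^n)$. By Lemma~\ref{le:translationboundaryvalue}, $v_k := \Qcal_\rho u_k$ lies in $W^{1,p}_{\Qcal_\rho g}(\Omega_\delta;\R^n)$, and $v := \Qcal_\rho u$ lies in $W^{1,2}_{\Qcal_\rho g}(\Omega_\delta;\R^n)$ (using \eqref{Qcalrho_prelim} with $p=2$). As in the proof of Proposition~\ref{prop:Gammaliminf}, boundedness and linearity of $\Qcal_\rho$ give $v_k \weakly v$ in $W^{1,p}(\Omega_\delta;\R^n)$. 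Weak continuity of $\Lcal$ gives $\Lcal(u_k) \to \Lcal(u)$. Together with \eqref{eq:ErhoepsbyFeps}, this yields
\begin{align*}
\Fcal_{\epsilon_k}(v_k) = \Ecal_{\rho,\epsilon_k}(u_k) + \Lcal(u_k) \to \Ecal_{\rho,\rm lin}(u) + \Lcal(u) = \Fcal_{\rm lin}(v).
\end{align*}
Hence $(v_k)_k$ is a recovery sequence for $v$ for the local $\Gamma$-limit \eqref{eq:localGammalimit}.

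\textbf{Step 2: apply the local result.} By \cite[Theorem~2.5]{ADD12}, every recovery sequence of the local problem converges strongly in $W^{1,p}(\Omega_\delta;\R^n)$. Therefore $\nabla v_k \to \nabla v$ in $L^p(\Omega_\delta;\R^{n\times n})$, and by \eqref{eq:translmechanism} this means $D_\rho u_k \to D_\rho u$ in $L^p(\Omega_\delta;\R^{n\times n})$.

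\textbf{Step 3: return to the full nonlocal norm.} The differences $w_k := u_k - u$ lie in $H^{\rho,p}_0(\Omega;\R^n)$. For such functions $D_\rho w_k = 0$ a.e.\ on $\Omega_\delta^c$, so the convergence from Step~2 in fact holds in $L^p(\R^n;\R^{n\times n})$. The nonlocal Poincar\'e inequality \cite[Theorem 4.11\,(i)]{BMS25}, applied with zero complementary values, gives
\begin{align*}
\norm{w_k}_{H^{\rho,p}(\R^n;\R^n)} \leq C \norm{D_\rho w_k}_{L^p(\Omega_\delta;\R^{n\times n})} \to 0.
\end{align*}
This is the claimed strong convergence in $H^{\rho,p}(\R^n;\R^n)$.

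The main obstacle is Step~2. One must check that the hypotheses of \cite[Theorem~2.5]{ADD12} match the present setting exactly: the Dirichlet datum $\Qcal_\rho g$, the domain $\Omega_\delta$, and a recovery sequence taken with respect to the weak $W^{1,p}$-topology without loads. If that theorem only applies to almost minimizers, or requires the boundary datum in $W^{1,\infty}$, the hypothesis $g \in H^{\rho,\infty}(\R^n;\R^n)$ still gives $\Qcal_\rho g \in W^{1,\infty}$, exactly as in Step~1 of Proposition~\ref{prop:Gammalimsup}. Steps~1 and~3 are routine.
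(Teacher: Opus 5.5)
Your proposal is correct and follows the paper's proof. Both arguments transfer the recovery sequence via $\Qcal_\rho$ to a local recovery sequence for $\Qcal_\rho u$, invoke \cite[Theorem~2.5]{ADD12}, and pull the gradient convergence back through \eqref{eq:translmechanism}, using $D_\rho(u_k-u)=0$ outside $\Omega_\delta$. The only difference is in the last step: you get $u_k\to u$ in $L^p$ from the nonlocal Poincar\'e inequality applied to $u_k-u\in H^{\rho,p}_0(\Omega;\R^n)$, whereas the paper uses the compact embedding of $H^{\rho,p}_0(\Omega;\R^n)$ into $L^p$ together with weak convergence. Both routes are valid, and yours is slightly more direct because it needs no compactness.
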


\begin{proof}
Let $(\epsilon_k)_{k}$ be a sequence of positive numbers with $\epsilon_k \to 0$ as $k \to \infty$. Given $u \in H^{\rho,2}_g(\Omega;\R^n)$,  consider a sequence $(u_k)_{k}$ in $H^{\rho,p}(\R^n;\R^n)$ such that $u_k \weakly u$ in $H^{\rho,p}(\R^n;\R^n)$ and
\begin{align*}\label{eq:recoveryseq}
\lim_{k \to \infty} \Ecal_{\rho,\epsilon_k}(u_k) = \lim_{k \to \infty} \Ecal_{\rho,\textrm{lin}}(u).
\end{align*}
Since $\Ecal_{\rho,\textrm{lin}}(u) < \infty$, it follows that $\Ecal_{\rho,\epsilon_k}(u_k) < \infty$, and hence $u_k \in H^{\rho,p}_g(\Omega;\R^n)$, for all sufficiently large $k \in \N$.

The continuity of the operator $\Qcal_{\rho}$ from \eqref{Qcalrho_prelim} implies $\Qcal_{\rho} u_k \weakly \Qcal_{\rho} u$ in $W^{1,p}(\Omega_\delta;\R^n)$. Moreover, \eqref{eq:ErhoepsbyFeps} and the continuity of $\Lcal$ yield
\begin{equation*}
\lim_{k \to \infty} \Fcal_{\epsilon_k}(\Qcal_{\rho} u_k) = \lim_{k \to \infty} \bigl( \Ecal_{\rho,\epsilon_k}(u_k) - \Lcal(u_k) \bigr) = \Ecal_{\rho,\textrm{lin}}(u) - \Lcal(u) = \Fcal_{\textrm{lin}}(u).
\end{equation*}
This shows that $(\Qcal_{\rho} u_k)_{k}$ is a recovery sequence for $\Qcal_{\rho} u$ associated with the  $\Gamma$-limit \eqref{eq:localGammalimit}.
We thus conclude from the strong convergence result in~\cite[Theorem~2.5]{ADD12} that 
\begin{align*}
\Qcal_{\rho} u_k \to \Qcal_{\rho} u \quad \text{ in $W^{1,p}(\Omega_{\delta};\R^n)$.}
\end{align*}

Now, accounting for the boundary conditions, the transfer mechanism in~\eqref{eq:translmechanism} allows us to deduce that
\begin{align*}
D_{\rho} u_k =\nabla (\Qcal_{\rho}u_k) \rightarrow \nabla (\Qcal_{\rho}u)=D_{\rho} u \quad \text{ in $L^p(\R^n;\R^{n \times n})$  as $k \to \infty$}. 
\end{align*}
Combining this with the observation that $u_k \to u$ in $L^{p}(\R^n;\R^n)$, which follows from the compact embedding of $H^{\rho,p}_{0}(\Omega;\R^n)$ into $L^{p}(\R^n;\R^n)$ (see~\cite[Theorem~4.11\,(ii)]{BMS25}), we arrive at the desired strong convergence $u_k \to u$ in $H^{\rho,p}(\R^n;\R^n)$ as $k \to \infty$.
\end{proof}

We now show that the functional $\Ecal_{\rho,\textrm{lin}}$ defined in \eqref{eq:limitfunctional} admits a unique minimizer. To do so, it suffices to adapt the argument of \cite[Theorem~4.3]{BG26}, which relies on the Lax-Milgram lemma and a nonlocal Korn inequality, to the case of heterogeneous energy densities. Although the proof is similar, we include it here for the reader's convenience.

\begin{lemma}[Existence of unique minimizer of $\Ecal_{\rho,\textrm{lin}}$]\label{le:uniqueminlimitmodel}

The functional $\Ecal_{\rho,\rm{lin}}\colon H^{\rho,p}(\R^n;\R^n) \to \R_\infty$  defined in~\eqref{eq:limitfunctional} has a unique minimizer. 
\end{lemma}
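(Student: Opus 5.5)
The plan is to reduce the problem to the Lax--Milgram lemma on the Hilbert space $H^{\rho,2}_0(\Omega;\R^n)$, following \cite[Theorem~4.3]{BG26}. Since $\Ecal_{\rho,\rm lin}(u)=\infty$ unless $u\in H^{\rho,2}_g(\Omega;\R^n)$, it suffices to minimize over this affine space. Writing $u=g+w$ with $w\in H^{\rho,2}_0(\Omega;\R^n)$, I expand
\[
\Ecal_{\rho,\rm lin}(g+w)=\Bcal_\rho(w,w)+2\Bcal_\rho(g,w)-\Lcal(w)+\Bcal_\rho(g,g)-\Lcal(g),
\]
with $\Bcal_\rho$ as in \eqref{eq:defaW}, extended to $H^{\rho,2}(\R^n;\R^n)$. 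The bilinear form is symmetric, because $D^2W(x,\Ibb)$ is a Hessian and hence has the minor symmetry. It is bounded by the estimate shown before \eqref{eq:defaW}. The linear part $w\mapsto 2\Bcal_\rho(g,w)-\Lcal(w)$ is bounded on $H^{\rho,2}_0(\Omega;\R^n)$. For $\Lcal$ this uses that $H^{\rho,2}_0(\Omega;\R^n)$ embeds continuously into $H^{\rho,p}(\R^n;\R^n)$ for $p\le 2$: functions vanish outside $\Omega$, $D_\rho w$ vanishes outside $\Omega_\delta$, and both sets are bounded, so Hölder's inequality applies.

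Once $\Bcal_\rho$ is coercive on $H^{\rho,2}_0(\Omega;\R^n)$, the quadratic functional $w\mapsto \Bcal_\rho(w,w)+2\Bcal_\rho(g,w)-\Lcal(w)$ is strictly convex, continuous and coercive. Lax--Milgram, or the Riesz representation theorem for the inner product $\Bcal_\rho$, then gives a unique minimizer, namely the unique solution of the Euler--Lagrange equation
\[
2\Bcal_\rho(w,\phi)=\Lcal(\phi)-2\Bcal_\rho(g,\phi)\quad\text{for all }\phi\in H^{\rho,2}_0(\Omega;\R^n).
\]

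The main obstacle is coercivity, which I would get in three steps.

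\emph{Pointwise ellipticity.} From \ref{W1}--\ref{W4}, the tensor satisfies $D^2W(x,\Ibb)[F,F]\ge c\abs{F_{\rm sym}}^2$ for a.e.\ $x$ and all $F$. To see this, apply Taylor's expansion at $\Ibb$: the first derivative vanishes since $\SO(n)$ consists of minima. Then use $W(x,\Ibb+tF)\ge \frac12\dist^2(\Ibb+tF,\SO(n))$ for small $t$, together with $\dist(\Ibb+tF,\SO(n))=t\abs{F_{\rm sym}}+O(t^2)$, which gives $c=1$. This yields $\Bcal_\rho(w,w)\ge \frac12\norm{e_\rho(w)}_{L^2(\Omega_\delta)}^2$.

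\emph{Nonlocal Korn inequality.} Next I need $\norm{D_\rho w}_{L^2}\le C\norm{e_\rho(w)}_{L^2}$ on $H^{\rho,2}_0(\Omega;\R^n)$. I would prove it by transfer rather than citing it. By \eqref{eq:translmechanism} and Lemma~\ref{le:translationboundaryvalue}, $\Qcal_\rho w\in W^{1,2}_0(\Omega_\delta;\R^n)$ with $\nabla\Qcal_\rho w=D_\rho w$ and $e(\Qcal_\rho w)=e_\rho(w)$. The classical Korn inequality for zero boundary values, $\norm{\nabla v}_{L^2}^2\le 2\norm{e(v)}_{L^2}^2$, therefore gives $\norm{D_\rho w}_{L^2(\Omega_\delta)}\le\sqrt2\,\norm{e_\rho(w)}_{L^2(\Omega_\delta)}$, and $D_\rho w=0$ outside $\Omega_\delta$.

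\emph{Poincaré inequality.} Finally, the nonlocal Poincaré inequality \cite[Theorem~4.11\,(i)]{BMS25} with $p=2$ gives $\norm{w}_{H^{\rho,2}}\le C\norm{D_\rho w}_{L^2}$.

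Combining the three steps, $\Bcal_\rho(w,w)\ge c\norm{w}_{H^{\rho,2}}^2$, which finishes the argument.
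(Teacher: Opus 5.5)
Your argument is correct. It has the same architecture as the paper's proof: shift by $g$, apply Lax--Milgram to $\Bcal_\rho$ on $H^{\rho,2}_0(\Omega;\R^n)$, and get coercivity from ellipticity of $D^2W(x,\Ibb)$ plus a nonlocal Korn inequality. The difference is in how you obtain the two key ingredients.

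The paper simply cites \cite[(2.4)]{ADD12} for $D^2W(x,\Ibb)[F_{\rm sym},F_{\rm sym}]\geq\abs{F_{\rm sym}}^2$ and \cite[Proposition~4.2]{BG26} for the nonlocal Korn inequality. You instead derive the former by Taylor expansion against \ref{W4}, and the latter by transferring the classical Korn inequality through $\Qcal_\rho$. The paper's route is shorter because it outsources both steps to the literature. Yours is self-contained, gives the explicit constant $\sqrt2$, and fits the paper's own transfer philosophy.

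Your transfer argument does not even need Lemma~\ref{le:translationboundaryvalue}. The inequality $\norm{\nabla v}_{L^2(\R^n)}^2\leq 2\norm{e(v)}_{L^2(\R^n)}^2$ holds for every $v\in W^{1,2}(\R^n;\R^n)$ by density of $C_c^\infty$. So \eqref{eq:translmechanism} alone gives $\norm{D_\rho u}_{L^2(\R^n)}\leq\sqrt2\,\norm{e_\rho(u)}_{L^2(\R^n)}$ for all $u\in H^{\rho,2}(\R^n;\R^n)$.

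You also make explicit two steps the paper leaves implicit. First, the Poincar\'e inequality is needed to upgrade coercivity in $\norm{D_\rho w}_{L^2}$ to coercivity in the $H^{\rho,2}$-norm. Second, the embedding $H^{\rho,2}_0(\Omega;\R^n)\hookrightarrow H^{\rho,p}(\R^n;\R^n)$ is needed for $\Lcal$ to be bounded on the Hilbert space. In addition, your linear term $w\mapsto\Lcal(w)-2\Bcal_\rho(g,w)$ has the correct sign, whereas the paper's $l$ carries the cross terms with a plus sign.

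One terminological slip: symmetry of $\Bcal_\rho$ comes from the \emph{major} symmetry of the Hessian. The minor symmetries are what frame indifference \ref{W1} provides, and you never need them, since $e_\rho(\cdot)$ is already symmetric.
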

\begin{proof} 
Using the estimate 
	\begin{align*}
		D^2W(x,\mathbb{I})[F_{\rm{sym}},F_{\rm{sym}}] \geq \abs{F_{\rm{sym}}}^2 \quad \text{ for all $F \in \R^{n\times n}$,}
	\end{align*}
    see \cite[(2.4)]{ADD12},
	along with the nonlocal Korn inequality in  \cite[Proposition 4.2]{BG26},
	we deduce that there is a constant $C>0$ such that
	\begin{align*}
		\int_{\Omega_{\delta}} D^2W(x,\mathbb{I})[e_{\rho}(v),e_{\rho}(v)] \dx \geq C \norm{D_{\rho} v}_{L^2(\Omega_{\delta};\R^{n \times n})}^2  \quad \text{ for all } v \in H_{0}^{\rho,2} (\Omega;\R^n).
	\end{align*}
    Therefore, the bilinear form $\mathcal{B}_{\rho}\colon H_{0}^{\rho,2}(\Omega;\R^n) \times H_{0}^{\rho,2}(\Omega;\R^n) \to \R$ introduced in~\eqref{eq:defaW} is coercive and, in view of~\ref{W3}, also bounded. 
 These observations allow us to apply Lax-Milgram lemma (see e.g. \cite[Corollary 5.8]{B11}) to the bilinear form $\Bcal_{\rho}$ and the bounded linear functional $l:H_{0}^{\rho,2} (\Omega;\R^n) \rightarrow \R$ given by
	\begin{align*}
		l(v) = \frac{1}{2}\int_{\Omega_{\delta}} D^2W(x,\mathbb{I})[e_{\rho}(v),e_{\rho}(g)] \dx+\frac{1}{2}\int_{\Omega_{\delta}} D^2W(x,\mathbb{I})[e_{\rho}(g),e_{\rho }(v)] \dx+ \Lcal(v),
	\end{align*}
   which yields the existence of a unique minimizer of $v\mapsto \mathcal{B}_{\rho}(v,v)-l(v)$ in $H_{0}^{\rho,2} (\Omega;\R^n)$. 
    
    As a consequence of the computation that for any $v \in H^{\rho,2}_0(\Omega;\R^n)$ and $u=v+g$,
	\begin{align*}
		\int_{\Omega_{\delta}} D^2W(x,\mathbb{I})[e_{\rho}(u),e_{\rho}(u)] \dx&=\int_{\Omega_{\delta}} D^2W(x,\mathbb{I})[e_{\rho}(v),e_{\rho}(v)] \dx +\int_{\Omega_{\delta}} D^2W(x,\mathbb{I})[e_{\rho}(v),e_{\rho}(g)] \dx\\&+\int_{\Omega_{\delta}} D^2W(x,\mathbb{I})[e_{\rho}(g),e_{\rho}(v)] \dx+\int_{\Omega_{\delta}} D^2W(x,\mathbb{I})[e_{\rho}(g),e_{\rho}(g)] \dx,
	\end{align*}
	we observe that  $v$ 
    minimizing $v\mapsto\mathcal{B}_{\rho}(v, v)-l(v)$ is equivalent to $u=v+g$
    minimizing $\Ecal_{\rho, \rm{lin}}$. This completes the proof.
\end{proof}

\begin{proof}[Proof of Corollary \ref{cor:strongconvofminimizers}] 
Let $(\epsilon_k)_{k}$ be a sequence of positive reals such that $\epsilon_k \to 0$ as $k \to \infty$ and let $(u_k)_{k}$ be a sequence of almost minimizers of $(\Ecal_{\rho,\epsilon_k})_{k}$. Further, let  $u_0$ 
denote the unique minimizer of $\Ecal_{\rho, \rm lin}$.

 The $\Gamma$-convergence and equicoercivity of the functionals $(\Ecal_{\rho,\epsilon})_{\epsilon>0}$ derived in Theorem~\ref{thm:linearization_fullbc} combined with the uniqueness of the minimizer of $\Ecal_{\rho,\rm{lin}}$ from Lemma \ref{le:uniqueminlimitmodel} imply  in view of~\cite[Corollary 7.24 and Proposition 8.16]{D93} 
 that  $u_k \weakly u_{0}$ in $H^{\rho,p}(\R^n;\R^n)$ as $k\to \infty$ and
\begin{align*}
 \lim_{k\rightarrow \infty} \Ecal_{\rho,\eps_k}(u_k)=\min_{u \in H^{\rho,p}(\R^n;\R^n)} \Ecal_{\rho,\rm{lin}}(u) 
 = \Ecal_{\rho,\rm{lin}}(u_0).
\end{align*}
In particular, $(u_k)_{k}$ is also a recovery sequence for $u_0$, so that applying Lemma \ref{le:strongconvergencerecovery} yields even the strong convergence $u_k \to u_0$ in $H^{\rho,p}(\R^n;\R^n)$.
\end{proof}

\section*{Acknowledgements}

The authors would like to thank Jos\'e Carlos Bellido and Guillermo Gac\'ia-S\'aez for valuable discussions. 
This research was funded by the AEI-DFG project ``Nonlocal Gradients in Variational Analysis and Materials Modelling: Limits, Kernels, Boundaries"
through the  Agencia Estatal de Investigaci\'on (AEI) project number PCI2024-155023-2 and the Deutsche Forschungsgemeinschaft (DFG)  
project number 541520348.

\bibliographystyle{abbrv}
\bibliography{bibliography}

\end{document}